\newtheorem{theorem}{Theorem}[section]
\newtheorem{corollary}[theorem]{Corollary}
\newtheorem{definition}[theorem]{Definition}
\newtheorem{lemma}[theorem]{Lemma}
\newtheorem{proposition}[theorem]{Proposition}
\newtheorem{remark}[theorem]{Remark}
\def\eqnarray{\stepcounter{equation}\let\@currentlabel=\theequation
\global\@eqnswtrue
\tabskip\@centering\let\\=\@eqncr
$$\halign to \displaywidth\bgroup\hfil\global\@eqcnt\z@
  $\displaystyle\tabskip\z@{##}$&\global\@eqcnt\@ne
  \hfil$\displaystyle{{}##{}}$\hfil
  &\global\@eqcnt\tw@ $\displaystyle{##}$\hfil
  \tabskip\@centering&\llap{##}\tabskip\z@\cr}
\def\endeqnarray{\@@eqncr\egroup
      \global\advance\c@equation\m@ne$$\global\@ignoretrue}
\def\@yeqncr{\@ifnextchar [{\@xeqncr}{\@xeqncr[5pt]}}
\DeclareMathAlphabet\gothic{U}{euf}{m}{n}
\newcommand{\gota}{\gothic{a}}
\def\R{\mathbb{R}}
\def\N{\mathbb{N}}
\def\C{\mathbb{C}}
\def\op{\operatorname{Re}}
\def\loc{\operatorname{loc}}
 \numberwithin{equation}{section}
\title[Asymptotic behavior]{Asymptotic behavior and representation of solutions to a Volterra kind of equation with a singular kernel}
\author{Rodrigo Ponce}
\address{Universidad de Talca\\Instituto de Matem\'atica y F\'isica\\Casilla 747, Talca-Chile.}
\email{rponce@inst-mat.utalca.cl}
\thanks{The research of the first author is partially supported by Fondecyt Iniciaci\'on \#11130619.}
\author{Mahamadi Warma}
\address{University of Puerto Rico  (Rio Piedras Campus)\\College of Natural Sciences\\
Department of Mathematics\\PO Box 70377 San Juan PR 00936-8377 (USA)}
\email{mahamadi.warma1@upr.edu, mjwarma@gmail.com}
\thanks{The work of the second author is partially supported by the Air Force Office of Scientific Research under the Award No: FA9550-15-1-0027.}
\subjclass[2010]{Primary 47D06; Secondary 45D05, 45N05, 47J35, 45M10}
\keywords{Volterra kind of equation, generalized Mittag-Leffler functions, explicit representation of solutions, exponential stability of solutions, heat equation with memory}
\begin{document}

%%% ----------------------------------------------------------------------

\begin{abstract}
Let $A$ be a densely defined closed, linear $\omega$-sectorial operator of angle $\theta\in [0,\frac{\pi}{2})$ on a Banach space $X$ for some $\omega\in\R$. We give an explicit representation (in terms of some special functions) and study the precise asymptotic behavior as time goes to infinity of solutions to the following diffusion equation with memory:
$\displaystyle u'(t)=Au(t)+(\kappa\ast Au)(t), \, t >0$, $u(0)=u_0$, associated with the (possible) singular kernel
 $\kappa(t)=\alpha e^{-\beta t}\frac{t^{\mu-1}}{\Gamma(\mu)},\;\;t>0$, where $\alpha\in\R$, $\alpha\ne 0$, $\beta\ge  0$ and $0<\mu\le 1$.
\end{abstract}
%%% ----------------------------------------------------------------------
\maketitle
%%% ----------------------------------------------------------------------

\section{Introduction}

In the present paper, we consider the following Volterra kind of system
\begin{equation}\label{eq-IV-0}
\begin{cases}
\displaystyle u'(t)=Au(t)+\int_0^t \kappa(t-s)Au(s)ds, \, t>0,\\
u(0)=u_0,
\end{cases}
\end{equation}
where $A$ with domain $D(A)$ is a densely defined linear sectorial operator on a Banach space $X$, $u_0$ is a given function in $X$ and the possible (singular) kernel $\kappa$ is given by
\begin{align}\label{kernel}
\kappa(t)=\alpha e^{-\beta t}\frac{t^{\mu-1}}{\Gamma(\mu)},\;\;t>0,
\end{align}
with $\alpha\in\mathbb{R}$, $\alpha\ne 0$, $\beta\ge 0$ and $0<\mu\le 1$. We mention that the kernel $\kappa\in L_{\loc}^1([0,\infty))$, and is even in $L^1((0,\infty))$ if $\beta>0$, but it is not in $W_{\loc}^{1,1}([0,\infty))$ if $0<\mu<1$.
 It is straightforward to verify that the system \eqref{eq-IV-0} is equivalent to the integral equation
\begin{equation}\label{eqv-sys}
\displaystyle u(t)=u_0+\int_0^t \left(1+1\ast \kappa\right)(t-s)Au(s)ds, \, \;\;t\ge 0.
\end{equation}
By \cite[Chapter I]{Pr}, the well-posedness of the system \eqref{eq-IV-0} (or equivalently, the integral equation \eqref{eqv-sys}) is equivalent to the existence of a family of bounded linear operators $(S_{\alpha,\beta}^\mu(t))_{t\ge 0}$ on $X$, which we shall call {\bf $(\alpha,\beta,\mu)$-resolvent family}, verifying the following properties.
\begin{itemize}
\item $S_{\alpha,\beta}^\mu(0)=I$ and $S_{\alpha,\beta}^\mu(\cdot)$ is strongly continuous on $[0,\infty)$.

\item $S_{\alpha,\beta}^\mu(t)x\in D(A)$ and $S_{\alpha,\beta}^\mu(t)Ax=AS_{\alpha,\beta}^\mu(t)x$ for all $x\in D(A)$ and $t\ge 0$.

\item For all $x\in D(A)$ and $t\ge 0$, the resolvent equation holds:
\begin{align}\label{reo-eq}
S_{\alpha,\beta}^\mu(t)x=x+\int_0^t \left(1+1\ast k\right)(t-s)AS_{\alpha,\beta}^\mu(s)x\;ds.
\end{align}
\end{itemize}

In that case, for every $u_0\in X$, the unique (mild) solution of \eqref{eq-IV-0} is given by
\begin{equation}\label{uni-sol}
u(t)=S_{\alpha,\beta}^\mu(t)u_0 ,\;\;t\ge 0.
\end{equation}
We give the idea how to get \eqref{uni-sol} without further detail.
In fact,  uniqueness of solutions is easy to establish. Taking the Laplace transform of both sides of the first equation in \eqref{eq-IV-0} we get that
\begin{align*}
\lambda \widehat u(\lambda)-u(0)=A\widehat u(\lambda)+\frac{\alpha}{(\lambda+\beta)^{\mu}}A\widehat u(\lambda).
\end{align*}
Thus
\begin{align}\label{Bb1}
&\frac{(\lambda+\beta)^{\mu}+\alpha}{(\lambda+\beta)^\mu}\left(\frac{\lambda(\lambda+\beta)^{\mu}}{(\lambda+\beta)^\mu+\alpha}-A\right)\widehat u(\lambda)\notag\\
=&[g(\lambda)]^{-1}\Big(h(\lambda)-A\Big)\widehat u(\lambda)=u_0,
\end{align}
where we have set
\begin{align*}
g(\lambda):=\frac{(\lambda+\beta)^\mu}{(\lambda+\beta)^\mu+\alpha}\;\mbox{ and }\; h(\lambda):=\frac{\lambda(\lambda+\beta)^\mu}{(\lambda+\beta)^\mu+\alpha}=\lambda g(\lambda).
\end{align*}
Note that $g(\lambda)$ and $h(\lambda)$ are well-defined for all $\lambda$ with $\op(\lambda)>0$.
Taking also the Laplace transform of both sides of \eqref{reo-eq} with $x=u_0$ we get that
\begin{align*}
\widehat S_{\alpha,\beta}^\mu(\lambda)u_0=\frac{1}{\lambda}u_0+\left(\frac{1}{\lambda}+\frac{\widehat k(\lambda)}{\lambda} \right)A\widehat S_{\alpha,\beta}^\mu(\lambda)u_0.
\end{align*}
Calculating, we obtain that
\begin{align}\label{Bb2}
&\frac{(\lambda+\beta)^{\mu}+\alpha}{(\lambda+\beta)^\mu}\left(\frac{\lambda(\lambda+\beta)^{\mu}}{(\lambda+\beta)^\mu+\alpha}-A\right)\widehat S_{\alpha,\beta}^\mu(\lambda)u_0\notag\\
=&[g(\lambda)]^{-1}\Big(h(\lambda)-A\Big)\widehat S_{\alpha,\beta}^\mu(\lambda)u_0=u_0.
\end{align}
It follows from \eqref{Bb1} and \eqref{Bb2} that if $h(\lambda)\in\rho(A)$, then
\begin{align}\label{Bb3}
\widehat u(\lambda)=\widehat S_{\alpha,\beta}^\mu(\lambda)u_0=g(\lambda)\Big(h(\lambda)-A\Big)^{-1}u_0.
\end{align}
Now taking the inverse Laplace transform of \eqref{Bb3} we get that the unique (mild) solution of \eqref{eq-IV-0} is given by \eqref{uni-sol}. For more details on this topic we refer to the monograph \cite{Pr}.

%We adopt the following notion of asymptotic behavior of our family.

Next, we assume that the system \eqref{eq-IV-0} is well-posed and we denote by $S_{\alpha,\beta}^\mu$ the associated $(\alpha,\beta,\mu)$-resolvent family.

\begin{definition}
We shall say that $S_{\alpha,\beta}^\mu$ is {\bf exponentially bounded, or of type $(M,\widetilde\omega)$}, if there exist two constants $M>0$ and $\widetilde\omega\in\R$ such that
\begin{align}\label{exp-bd}
\|S_{\alpha,\beta}^\mu(t)\|\le M e^{\widetilde\omega t},\;\;\;\forall\;t\ge 0.
\end{align}
The resolvent family $S_{\alpha,\beta}^\mu$ will be said to be {\bf uniformly exponentially stable} if  \eqref{exp-bd} holds with some constants $M>0$ and $\widetilde\omega<0$.
\end{definition}

As we have mentioned above, the well-posedness of the system \eqref{eq-IV-0} and many other properties of  resolvent families have been intensively studied in the monograph \cite{Pr}. Similar problems have been also considered in \cite{Ch-Li-Xi-11,Cor,Jac,Li-00,Li-Po-11} and their references. The asymptotic behavior as time goes to $\infty$ of solutions to some similar integro-differential equations in finite dimension involving kernels as the one in \eqref{kernel} have been also studied in \cite{VeZa}.

The main concerns in the present paper are the following.
\begin{enumerate}
\item[(1)] Study the precise asymptotic behavior as time goes to infinity of  solutions to the system \eqref{eq-IV-0}. That is, we would like to characterize the minimum real number $\widetilde\omega$ in the estimate \eqref{exp-bd}. Of particular interest will be to find  some precise conditions on the parameters $\alpha$, $\beta$, $\mu$ and the operator $A$ that shall imply the uniform exponential stability of the family $S_{\alpha,\beta}^\mu$, and hence, of solutions to the system \eqref{eq-IV-0}.

\item[(2)] Find an explicit representation of solutions to the system \eqref{eq-IV-0} in terms of some special functions. The Mittag-Leffler functions and its generalizations will be the natural candidate.

\end{enumerate}

We mention that if $\mu=1$ and $\beta>0$ in \eqref{kernel}, that is, $\kappa(t)=\alpha e^{-\beta t}$, then the asymptotic behavior of the associated resolvent family
has been completely studied  in \cite{Ch-Xi-Li-09} by using some semigroups method. See also \cite[Chapter VI]{En-Na-00} where the well-posedness of the system \eqref{eq-IV-0} for a general kernel $\kappa\in W^{1,1}((0,\infty))$ has been obtained by using again the method of semigroups.
One of our concerns is to extend the results contained in \cite{Ch-Xi-Li-09} to the case $0<\mu<1$, where the method of semigroups  cannot be used. Note that in our situation $\kappa\not\in W_{\loc}^{1,1}([0,\infty))$.
If $\beta=0$ and $0<\mu<1$, then it is well-known (see e.g. \cite{Cu-07,Go-Ma00,Ha-Ma-Sa-11,Ke-Li-Wa-13,Mi-Ro-93} and their references) that the corresponding $(\alpha,0,\mu)$-resolvent family is never uniformly exponentially stable. In this case, solutions may decay but only at most polynomial. We shall see here that the situation is different if $\beta>0$.  More precisely, assuming that the operator $A$ is $\omega$-sectorial of angle $\theta$ (see Section \ref{main-resl} below for the definition) for some $\omega<0$, $0\le \theta\le\widetilde\mu\frac{\pi}{2}$, where $0<\mu<\widetilde\mu\le 1$, and that $\beta+\omega\le 0$, we obtain the following result.
\begin{enumerate}
\item  If $\alpha>0$, then the family is uniformly exponentially stable with $\widetilde\omega=-\beta$.

\item If $\alpha<0$ and $\alpha+\beta^\mu\ge |\alpha|$, then the family is exponentially bounded with exponential bound $\widetilde\omega=-\left(\beta-(\alpha\omega)^{\frac{1}{\mu+1}}\right)$. This will imply the uniform exponential stability of the family if in addition $\beta^{\mu+1}>\alpha\omega$.
\end{enumerate}
%\item If $\beta+\omega_1>0$ and $\alpha>0$, then the family is uniformly exponentially stable with $\omega=\omega_1$.
%\end{enumerate}

The rest of the paper is structured as follows. In Section \ref{main-resl} we state the main results where some generation  results of $(\alpha,\beta,\mu)$-resolvent families and the precise asymptotic behavior as time goes to infinity of the resolvent family have been obtained. Section \ref{sec-pr-gen} contains the proof of the generation theorem. In Section \ref{sec-pr-2th} we give the proof of the asymptotic behavior as time goes to infinity of the resolvent family. Finally in Section \ref{sec-par-ca}, assuming that $A=\rho I$ for some $\rho\in\R$ or $A$ is a self-adjoint operator on $L^2(\Omega)$ (where $\Omega\subset\R^N$ is a bounded open set) with compact resolvent, we obtain an explicit representation of solutions to the system \eqref{eq-IV-0} (and hence of the associated resolvent family) in terms of Mittag-Leffler type of functions.

\section{Main results}\label{main-resl}

Let $X$ be a Banach space with norm $\|\cdot\|$.  For a closed, linear and densely defined operator $A$ on $X$, we denote by $\sigma(A)$ and $\rho(A)$ the spectrum and the resolvent set of $A$, respectively. The operator $A$ is said to be {\bf $\omega$-sectorial of angle $\theta$} if there exist
$\theta\in [0,\pi/2)$ and $\omega\in \mathbb{R}$ such that
\begin{align*}
w+\Sigma_{\theta}:=\left\{\omega+\lambda,\; \lambda\in \mathbb{C}:|\arg(\lambda)|<\theta+\frac{\pi}{2}\right\}\subset\rho(A),
\end{align*}
and one has the estimate
\begin{align*}
\|(\lambda-A)^{-1}\|\leq \frac{M}{|\lambda-\omega|}, \quad \forall\;\lambda \in \{\omega+\Sigma_{\theta}\}\setminus\{\omega\}.
\end{align*}

If $\omega=0$, we shall only say that $A$ is {\bf sectorial of angle $\theta$}. More details on sectorial operators can be found in \cite{En-Na-00,Haa-06} and they references.

In this section we state the main results of the paper.  We start with the generation theorem.

\begin{theorem}\label{generation}
Let $\alpha\in\mathbb{R}$, $\alpha\ne 0$, $\beta\ge 0$ and $0<\mu\leq 1$. Assume any one of the following two conditions.

\begin{enumerate}
\item  $\alpha> 0$ and $A$ is a sectorial operator of angle $\mu\frac{\pi}{2}$.

\item $\alpha<0,$ $\alpha+\beta^\mu\ge |\alpha|$ and $A$ is a sectorial operator of angle $\mu\frac{\pi}{2}$.
\end{enumerate}
Then $A$ generates an $(\alpha,\beta,\mu)$-resolvent family $(S_{\alpha,\beta}^\mu(t))_{t\geq 0}$ of type $(M,\widetilde\omega)$ for every $\widetilde\omega>0$.
\end{theorem}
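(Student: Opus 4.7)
The plan is to work directly from the Laplace transform identity \eqref{Bb3} and construct $S_{\alpha,\beta}^\mu(t)$ via a Dunford-type contour integral. Recall that, formally,
\[
\widehat{S}_{\alpha,\beta}^\mu(\lambda) = g(\lambda)\bigl(h(\lambda)-A\bigr)^{-1}, \qquad h(\lambda)=\lambda g(\lambda)=\frac{\lambda(\lambda+\beta)^\mu}{(\lambda+\beta)^\mu+\alpha}.
\]
So I would define, for a suitable Hankel contour $\Gamma$ surrounding the singular set,
\[
S_{\alpha,\beta}^\mu(t) := \frac{1}{2\pi i}\int_{\Gamma} e^{\lambda t}\,g(\lambda)\bigl(h(\lambda)-A\bigr)^{-1}\,d\lambda,
\]
and verify that this operator family satisfies the three defining properties of an $(\alpha,\beta,\mu)$-resolvent family, together with the bound $\|S_{\alpha,\beta}^\mu(t)\|\le Me^{\widetilde\omega t}$ for every $\widetilde\omega>0$.

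The first step is to show that the scalar function $h$ is holomorphic and that its denominator does not vanish on a half-plane slightly to the left of the imaginary axis. In case (1), since $\mathrm{Re}((\lambda+\beta)^\mu)>0$ whenever $\mathrm{Re}(\lambda+\beta)>0$, we have $|(\lambda+\beta)^\mu+\alpha|\ge \alpha>0$. In case (2), the hypothesis $\alpha+\beta^\mu\ge |\alpha|$ rewrites as $\beta^\mu\ge 2|\alpha|$, hence $|(\lambda+\beta)^\mu+\alpha|\ge \beta^\mu-|\alpha|\ge |\alpha|>0$ on $\mathrm{Re}(\lambda)\ge 0$. By analyticity this extends to a small open neighborhood of the closed right half-plane, which will be the region where we place $\Gamma$.

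The crucial analytical step is to show that $h$ sends this region into the sectoriality sector of $A$, i.e.\ $|\arg h(\lambda)|<\frac{\pi}{2}+\mu\frac{\pi}{2}$, so that we may invoke the hypothesis $\|(h(\lambda)-A)^{-1}\|\le M/|h(\lambda)|$. Writing
\[
\arg h(\lambda)=\arg\lambda+\mu\arg(\lambda+\beta)-\arg\bigl((\lambda+\beta)^\mu+\alpha\bigr),
\]
the first term is bounded by $\pi/2$, the second by $\mu\pi/2$, and the third must be controlled. In case (1) with $\alpha>0$ the vector $(\lambda+\beta)^\mu+\alpha$ has the same sign of argument as $(\lambda+\beta)^\mu$ but a strictly smaller absolute value, so the cancellation is favorable. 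In case (2) one uses $\beta^\mu\ge 2|\alpha|$ to bound the argument contributed by the negative shift $\alpha$. Once the image inclusion is established, one combines the sectorial estimate for $(h(\lambda)-A)^{-1}$ with $|g(\lambda)|\le C$ and $|h(\lambda)|\gtrsim |\lambda|$ on the contour to obtain $\|g(\lambda)(h(\lambda)-A)^{-1}\|\le C/|\lambda|$.

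With these estimates in hand, the contour integral converges absolutely for $t>0$ and defines a strongly continuous operator family; the commutation $A S_{\alpha,\beta}^\mu(t)=S_{\alpha,\beta}^\mu(t)A$ on $D(A)$ follows from the corresponding property of the resolvent, and the resolvent equation \eqref{reo-eq} is verified by computing the Laplace transform of both sides and comparing with \eqref{Bb3}. Finally, to obtain the exponential bound with growth rate any $\widetilde\omega>0$, one shifts $\Gamma$ to a Hankel contour with vertex at $\widetilde\omega$; this is permitted since the resolvent and $h,g$ are holomorphic in a neighborhood of $\mathrm{Re}(\lambda)\ge 0$, and the standard estimates along such a contour yield $\|S_{\alpha,\beta}^\mu(t)\|\le M e^{\widetilde\omega t}$. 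I expect the principal obstacle to be the argument analysis of $h$ in case (2), where the negative shift by $\alpha$ forces one to exploit the sharp hypothesis $\alpha+\beta^\mu\ge |\alpha|$; the remaining steps are largely routine applications of the Dunford functional calculus together with Prüss' generation machinery from \cite[Chapter~I]{Pr}.
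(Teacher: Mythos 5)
Your identification of the key analytical fact is the right one: everything hinges on showing that $h(\lambda)=\lambda(\lambda+\beta)^\mu/((\lambda+\beta)^\mu+\alpha)$ maps the open right half-plane into the sector $\Sigma_{\mu\pi/2}$ where the sectorial estimate for $A$ applies, and your use of $\alpha+\beta^\mu\ge|\alpha|$ (i.e.\ $\beta^\mu\ge 2|\alpha|$) together with $\mathrm{Re}\big((\lambda+\beta)^\mu\big)\ge(\mathrm{Re}(\lambda)+\beta)^\mu$ to keep the denominator away from zero matches what the paper establishes in Lemmas \ref{lem1} and \ref{lem-h}. (The paper proves $|\arg h(\lambda)|\le(1+\mu)|\arg\lambda|$ via the logarithmic derivative bound $|\lambda h'(\lambda)/h(\lambda)|\le 1+\mu$ rather than by splitting $\arg h$ into three summands; your splitting can be made rigorous, though note that for $\alpha>0$ adding $\alpha$ to $(\lambda+\beta)^\mu$ shrinks the \emph{argument}, not the modulus as you wrote.)

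The genuine gap is in the construction step. The inequality $|\arg h(\lambda)|\le(1+\mu)|\arg\lambda|$ is exactly borderline: it places $h(\lambda)$ in $\Sigma_{\mu\pi/2}$ only for $|\arg\lambda|<\tfrac{\pi}{2}$, with no angular margin. Consequently you cannot rotate your contour into the left half-plane, while a contour confined to $\mathrm{Re}(\lambda)\ge\widetilde\omega>0$ gives an integrand decaying only like $1/|\lambda|$ (the best available bound is $\|g(\lambda)(h(\lambda)-A)^{-1}\|\le C/|\lambda|$), so the integral is not absolutely convergent; conditional-convergence devices such as integrating by parts against $H'$ cost a factor of order $1/t$ or $\log(1/t)$ and obstruct both the uniform bound $Me^{\widetilde\omega t}$ near $t=0$ and the verification of $S_{\alpha,\beta}^\mu(0)=I$. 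A true Hankel contour with rays tending to $\mathrm{Re}(\lambda)=-\infty$ is not available either: $h$ has a branch point at $-\beta$, the denominator vanishes at $-\beta+|\alpha|^{1/\mu}$ when $\alpha<0$, and for $|\arg\lambda|>\tfrac{\pi}{2}$ the inclusion $h(\lambda)\in\Sigma_{\mu\pi/2}$ is lost. This is precisely why the paper does not construct $S_{\alpha,\beta}^\mu$ by a Dunford integral but instead invokes the real-variable generation theorem of Pr\"uss (Theorem \ref{theo-Pruss}): one only needs $\|\lambda H(\lambda)\|+\|\lambda^2H'(\lambda)\|\le M$ for $\mathrm{Re}(\lambda)>0$, where $H(\lambda)=g(\lambda)(h(\lambda)-A)^{-1}$, which by \cite[Proposition 0.1]{Pr} yields $\|H^{(n)}(\lambda)\|\le Mn!/\lambda^{n+1}$ for real $\lambda>0$ and hence generation of a family of type $(M,\widetilde\omega)$ for every $\widetilde\omega>0$. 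To repair your proof, replace the contour construction by this Widder-type argument; the estimates you already have on $g$, $h$ and the resolvent are exactly the required input, modulo the additional bound on $H'$.
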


\begin{corollary}\label{cor-generation}
Let $\alpha\in\mathbb{R}$, $\alpha\ne 0$, $\beta\ge 0$, $0<\mu\leq 1$ and $\omega\in\R$. Assume any one of the following two conditions.

\begin{enumerate}
\item  $\alpha> 0$ and $A$ is an $\omega$-sectorial operator of angle $\mu\frac{\pi}{2}$.

\item $\alpha<0,$ $\alpha+\beta^\mu\ge |\alpha|$ and $A$ is an $\omega$-sectorial operator of angle $\mu\frac{\pi}{2}$.
\end{enumerate}
Then $A$ generates an $(\alpha,\beta,\mu)$-resolvent family $(S_{\alpha,\beta}^\mu(t))_{t\geq 0}$ of type $(M,\widetilde \omega)$ for some $\widetilde\omega>0$.
\end{corollary}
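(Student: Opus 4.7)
My plan is to re-run the Laplace-transform/resolvent-family construction used in the proof of Theorem~\ref{generation}, but on a half-plane $\{\lambda\in\C:\op(\lambda)>\widetilde\omega\}$ shifted to the right by a sufficiently large $\widetilde\omega>0$ in order to absorb the $\omega$-offset of the sector where the resolvent of $A$ is controlled. The starting point is the formal Laplace transform \eqref{Bb3},
\[
\widehat{S}_{\alpha,\beta}^\mu(\lambda)=g(\lambda)\bigl(h(\lambda)-A\bigr)^{-1},
\]
which is defined precisely when $h(\lambda)\in\rho(A)$; under $\omega$-sectoriality this is guaranteed by $h(\lambda)\in\omega+\Sigma_{\mu\pi/2}$.

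The first step is to analyze the mapping properties of $h$. Since $(\lambda+\beta)^\mu+\alpha\sim(\lambda+\beta)^\mu\sim\lambda^\mu$ as $|\lambda|\to\infty$ in any proper subsector of the open right half-plane, one has
\[
h(\lambda)=\lambda\bigl(1+O(|\lambda|^{-\mu})\bigr),\qquad |\lambda|\to\infty,
\]
so both $|h(\lambda)|\sim|\lambda|$ and $\arg h(\lambda)\to \arg\lambda$. By choosing $\widetilde\omega>0$ large enough (depending on $\omega$, $\alpha$, $\beta$, $\mu$), one arranges that for every $\lambda$ with $\op(\lambda)>\widetilde\omega$, the image $h(\lambda)$ lies in the shifted sector $\omega+\Sigma_{\mu\pi/2}\subset\rho(A)$.

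With the mapping property in hand, the $\omega$-sectorial estimate $\|(h(\lambda)-A)^{-1}\|\le M/|h(\lambda)-\omega|$, together with the boundedness of $g(\lambda)$ and the asymptotics of $h$, yields
\[
\bigl\|g(\lambda)\bigl(h(\lambda)-A\bigr)^{-1}\bigr\|\le \frac{C}{|\lambda|},\qquad \op(\lambda)>\widetilde\omega,
\]
for some constant $C>0$. Combined with the analyticity of this map, the hypotheses of the generation criterion for resolvent families in \cite[Chapter I]{Pr} are met; Laplace inversion then produces the desired strongly continuous family $(S_{\alpha,\beta}^\mu(t))_{t\ge 0}$ of type $(M,\widetilde\omega)$, and the resolvent equation \eqref{reo-eq} follows by repeating the algebraic manipulations that led to \eqref{Bb3}.

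The main technical difficulty lies in the first step: verifying the inclusion $h(\{\op(\lambda)>\widetilde\omega\})\subset\omega+\Sigma_{\mu\pi/2}$ \emph{uniformly} in $\lambda$, especially near the boundary line $\op(\lambda)=\widetilde\omega$ where the asymptotic approximation $h(\lambda)\approx\lambda$ is weakest. One must track the argument of $(\lambda+\beta)^\mu+\alpha$ (which, for $\alpha<0$, is constrained by the assumption $\alpha+\beta^\mu\ge|\alpha|$ to remain bounded away from zero) so that the denominator of $h$ neither vanishes nor rotates $h(\lambda)$ outside the shifted sector. Once this is done, the remainder of the argument is a bookkeeping exercise parallel to the sectorial case.
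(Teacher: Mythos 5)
Your route is genuinely different from the paper's, which disposes of the corollary in one line: write $A=(A-\omega I)+\omega I$, note that $A-\omega I$ is sectorial of angle $\mu\frac{\pi}{2}$ so that Theorem \ref{generation} applies to it, and then invoke the bounded-perturbation theorem for resolvent families \cite[Corollary 3.2]{LiSa} to absorb the bounded operator $\omega I$. That argument requires no new estimates at all; yours re-runs the generation machinery on a shifted half-plane. Your approach could be made to work, but as written it has two genuine gaps.

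First, the inclusion $h(\{\op(\lambda)>\widetilde\omega\})\subset\omega+\Sigma_{\mu\pi/2}$, which you correctly identify as the crux, is not actually established. Note that Lemma \ref{lem-h} only gives $h(\lambda)\in\Sigma_{\mu\pi/2}$, and for $\omega>0$ the set $\Sigma_{\mu\pi/2}\setminus(\omega+\Sigma_{\mu\pi/2})$ is an \emph{unbounded} strip along the boundary rays, so neither the argument bound nor the largeness of $|h(\lambda)|$ suffices. What saves the claim is quantitative: $|h(\lambda)-\lambda|=\left|\frac{\alpha\lambda}{(\lambda+\beta)^\mu+\alpha}\right|\le C|\lambda|^{1-\mu}$ once $|(\lambda+\beta)^\mu+\alpha|\gtrsim|\lambda|^\mu$ (which uses \eqref{eqA.1} in case (b)), while the distance from a point $\lambda$ with $\op(\lambda)>\widetilde\omega$ to the boundary of $\omega+\Sigma_{\mu\pi/2}$ is at least $(\widetilde\omega-\omega)\cos(\mu\tfrac{\pi}{2})+|\operatorname{Im}\lambda|\sin(\mu\tfrac{\pi}{2})$, which dominates $C|\lambda|^{1-\mu}$ for $\widetilde\omega$ large because $1-\mu<1$. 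You should write this comparison out; it is the whole content of the step. Second, you claim that the resolvent bound $\|H(\lambda)\|\le C/|\lambda|$ "combined with analyticity" meets the hypotheses of the generation criterion. It does not: Theorem \ref{theo-Pruss} requires $\|H^{(n)}(\lambda)\|\le Mn!(\lambda-\widetilde\omega)^{-(n+1)}$ for \emph{all} $n$, and Cauchy's integral formula applied to an analytic function bounded by $C/|\lambda-\widetilde\omega|$ on a half-plane produces an extra factor of order $n$, which destroys the Widder-type condition. This is exactly why the proof of Theorem \ref{generation} also estimates $\lambda^2H'(\lambda)$ and then invokes \cite[Proposition 0.1]{Pr}; you must supply the analogous bound on $(\lambda-\widetilde\omega)^2H'(\lambda)$ (the same algebraic computation works) before you can conclude.
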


As we have mentioned above, the asymptotic behavior as time goes to infinity of the resolvent family for the case $\mu=1$ has been completely studied in \cite{Ch-Xi-Li-09}. Therefore we concentrate on the case $0<\mu<1$.
The following estimates of the resolvent family is the second main result of the paper.

\begin{theorem}\label{th2-P}
Let $\alpha\in\mathbb{R}$, $\alpha\ne 0$, $\beta\ge 0$,  $0<\mu<\widetilde\mu \leq 1$ and $\omega<0.$ Assume that $\beta+\omega\leq 0.$ Then the following assertions hold.
\begin{enumerate}
\item If $\alpha>0$ and $A$ is $\omega$-sectorial of angle $\widetilde\mu\frac{\pi}{2},$ then
      there exists a constant $C>0$  such that the  resolvent family $(S_{\alpha,\beta}^\mu(t))_{t\geq 0}$ generated by $A$ satisfies the estimate
      \begin{align}\label{eq-asymp1-P}
         \|S_{\alpha,\beta}^\mu(t)\|\leq C e^{-\beta t}, \quad \forall\;t\geq 0.
      \end{align}
\item If $\alpha<0$, $\alpha+\beta^\mu\ge |\alpha|$  and $A$ is $\omega$-sectorial  of angle $\widetilde\mu\frac{ \pi}{2}$, then
      there exists a constant $C>0$  such that the resolvent family $(S_{\alpha,\beta}^\mu(t))_{t\geq 0}$ generated by $A$ satisfies the estimate
      \begin{align}\label{eq-asymp2-P}
         \|S_{\alpha,\beta}^\mu(t)\|\leq C\left(1+\alpha\omega t^{\mu+1}\right)e^{-(\beta-(\alpha\omega)^{\frac{1}{\mu+1}}) t}, \quad\forall\; t\geq 0.
      \end{align}
\end{enumerate}
\end{theorem}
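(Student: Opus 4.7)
The plan is to represent the resolvent family via Laplace inversion of the formula \eqref{Bb3}, namely
\[
S_{\alpha,\beta}^\mu(t) = \frac{1}{2\pi i}\int_\Gamma e^{\lambda t}\,g(\lambda)\,(h(\lambda)-A)^{-1}\,d\lambda,
\]
along a Hankel-type contour $\Gamma$, and then to deform $\Gamma$ so that the leftmost point of its real projection equals the exponent claimed in \eqref{eq-asymp1-P} or \eqref{eq-asymp2-P}. The geometric requirements on the deformed contour are (i) that $g$ be analytic on and to the right of $\Gamma$; (ii) that the image $h(\Gamma)$ lie in the sectoriality region $\omega+\Sigma_{\widetilde\mu\pi/2}\subset\rho(A)$ so that the sectorial resolvent estimate applies; and (iii) that the integrand be absolutely integrable along $\Gamma$.

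For part (1), with $\alpha>0$, the denominator $(\lambda+\beta)^\mu+\alpha$ has no zeros in the half-plane $\op(\lambda)>-\beta$, so $g$ is analytic there. I would then translate $\Gamma$ to $-\beta+\partial\Sigma_\phi$ for a suitable $\phi\in(\pi/2,\pi)$ and write $e^{\lambda t}=e^{-\beta t}\,e^{(\lambda+\beta)t}$ to isolate the factor $e^{-\beta t}$. Setting $z:=\lambda+\beta$, the function $h$ behaves like $-\beta z^\mu/\alpha$ near $z=0$ and like $\lambda$ as $|z|\to\infty$, so using $0<\mu<\widetilde\mu\le 1$ together with $\beta+\omega\le 0$, a tracking of $\arg(h(\lambda)-\omega)$ along $\Gamma$ should yield $h(\Gamma)\subset\omega+\Sigma_{\widetilde\mu\pi/2}$. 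The sectorial estimate $\|(h(\lambda)-A)^{-1}\|\le M/|h(\lambda)-\omega|$, combined with a pointwise bound on $|g(\lambda)|$, then produces \eqref{eq-asymp1-P} by a standard estimation of the contour integral.

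For part (2), with $\alpha<0$, the new complication is that $g$ acquires a pole at $\lambda_\ast=|\alpha|^{1/\mu}-\beta$. The hypothesis $\alpha+\beta^\mu\ge|\alpha|$, equivalent to $\beta^\mu\ge 2|\alpha|$, forces $\lambda_\ast\le(1-2^{1/\mu})|\alpha|^{1/\mu}<0$ and keeps the pole strictly in the left half-plane. The characteristic exponent $(\alpha\omega)^{1/(\mu+1)}$ appearing in \eqref{eq-asymp2-P} is produced by solving $h(\lambda)=\omega$: in the variable $z=\lambda+\beta$ this reduces to $z^{\mu+1}-(\beta+\omega)z^\mu=\alpha\omega$, which, in the limiting case $\beta+\omega=0$, gives $z=(\alpha\omega)^{1/(\mu+1)}>0$ (legitimate since $\alpha\omega>0$). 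The optimal contour is therefore $\Gamma=-\beta+(\alpha\omega)^{1/(\mu+1)}+\partial\Sigma_\phi$, and it must remain to the right of $\lambda_\ast$; this separation is precisely what the hypothesis $\beta^\mu\ge 2|\alpha|$ secures. The polynomial prefactor $1+\alpha\omega t^{\mu+1}$ in \eqref{eq-asymp2-P} reflects the tangency of $h(\Gamma)$ with the boundary of the resolvent sector at $\omega$; it should emerge by splitting the Bromwich integral into a local piece near the tangency (where a Taylor expansion of $h$ around the critical point contributes the polynomial growth) and a tail piece bounded directly by sectoriality.

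The main obstacle is the geometric step (ii): showing that the shifted contour maps under $h$ into $\omega+\Sigma_{\widetilde\mu\pi/2}$. This amounts to a quantitative argument-tracking for the rational-power function $h(\lambda)=\lambda(\lambda+\beta)^\mu/((\lambda+\beta)^\mu+\alpha)$, whose qualitative behavior switches between the small-$z$ regime (where $h$ scales like $z^\mu$) and the large-$z$ regime (where $h$ scales like $\lambda$). The strict inequality $\mu<\widetilde\mu$ is precisely the slack that allows the $z^\mu$-contribution to fit inside the larger sector of sectoriality of $A$. Once this geometric control is in hand, both \eqref{eq-asymp1-P} and \eqref{eq-asymp2-P} reduce to routine estimations of the Bromwich integral.
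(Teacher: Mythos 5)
Your overall architecture coincides with the paper's: represent $S_{\alpha,\beta}^\mu$ by Laplace inversion of $g(\lambda)(h(\lambda)-A)^{-1}$, factor out $e^{-\beta t}$ via the shift $\lambda\mapsto\lambda-\beta$ (the paper writes $S_{\alpha,\beta}^\mu(t)=e^{-\beta t}G_{\alpha,\beta}^\mu(t)$ with $\widehat G_{\alpha,\beta}^\mu(\lambda)=\frac{\lambda^\mu}{\lambda^\mu+\alpha}\bigl(\widetilde h(\lambda)I-A\bigr)^{-1}$), verify that the transformed symbol maps the contour into $\omega+\Sigma_{\widetilde\mu\pi/2}$, and read off the exponent from the roots of $\lambda^{\mu+1}-(\beta+\omega)\lambda^\mu-\alpha\omega$. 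Your identification of $(\alpha\omega)^{1/(\mu+1)}$, of the location of the pole of $g$ under $\beta^\mu\ge 2|\alpha|$, and of the role of the slack $\mu<\widetilde\mu$ is correct.

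There are, however, two genuine gaps. First, the step you yourself flag as ``the main obstacle'' --- that the image of the contour under $\widetilde h$ lies in $\omega+\Sigma_{\widetilde\mu\pi/2}$ --- is exactly where the hypotheses $\alpha>0$, resp.\ $\alpha+\beta^\mu\ge|\alpha|$, do their work, and it is not supplied. The paper proves it as a separate lemma (Lemma \ref{lem-ex}): for $\op(\lambda)<0$ and $|\lambda^\mu|\ge 2|\alpha|$ one has $|\arg(\widetilde h(\lambda))|\le(1+\mu)|\arg(\lambda)|$, obtained by integrating the logarithmic derivative $\lambda\widetilde h'(\lambda)/\widetilde h(\lambda)=\frac{\lambda}{\lambda-\beta}+\frac{\mu\alpha}{\lambda^\mu+\alpha}$ and bounding its modulus by $1+\mu$; choosing the contour so that $(\mu+1)\frac{\pi}{2}<|\arg(\lambda^{\mu+1})|<\frac{\pi}{2}+\widetilde\mu\frac{\pi}{2}$ then places $\widetilde h(\lambda)$ in the resolvent sector. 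Without an argument of this type the plan does not close. Second, and more seriously, your \emph{static} contour in part (2), $-\beta+(\alpha\omega)^{1/(\mu+1)}+\partial\Sigma_\phi$, passes through (or is tangent to) the locus where $h(\lambda)=\omega$; there the sectorial bound $\|(h(\lambda)-A)^{-1}\|\le M/|h(\lambda)-\omega|$ degenerates non-integrably (a simple zero of $h-\omega$ on the path makes $1/|h-\omega|$ behave like $1/|\lambda-\lambda_0|$ along the curve), so a ``Taylor expansion near the tangency'' cannot produce a finite integral on a $t$-independent path. The paper's device is Cuesta's time-dependent contour: $\lambda^{\mu+1}$ runs along the boundary of $\{\alpha\omega+t^{-(\mu+1)}+\Sigma_\theta\}\cup S_\phi$, so the path stays at distance of order $t^{-(\mu+1)}$ from the critical point; the prefactor $1+\alpha\omega t^{\mu+1}$ then falls out of the resulting length and distance estimates together with $|e^{\lambda t}|\le Ce^{(\alpha\omega)^{1/(\mu+1)}t}$ on that arc, not from a local expansion. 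The same $t$-dependence is what makes the tail integrals $\int_0^\infty e^{st\cos\varphi}s^\mu\,ds\sim t^{-(\mu+1)}$ cancel the prefactor $g(t)=t^{\mu+1}$ in part (1) as well. In short: the strategy is the paper's, but the contour must be allowed to depend on $t$ for either estimate to come out.
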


\section{Proof of Theorem \ref{generation} and Corollary \ref{cor-generation}}\label{sec-pr-gen}

Before given the proof of the generation theorem we need some intermediate results. First, we recall the following fundamental result  from \cite[Chapter I, Theorem 1.3]{Pr}.

\begin{theorem}\label{theo-Pruss}
Let $\gota\in L_{\operatorname{loc}}^1([0,\infty))$ satisfy
\begin{equation}\label{cond-a}
\int_0^\infty e^{-\widetilde\omega t}|\gota (t)|\;dt<\infty.
\end{equation}
Let $A$ with domain $D(A)$ be a linear operator on $X$ with dense domain. Then $A$ generates a resolvent family {\em (associated with $\gota$)} of type $(M,\widetilde\omega)$ on $X$ if and only if the following two conditions hold.
\begin{enumerate}
\item $\widehat \gota(\lambda)\ne 0$ and $\frac{1}{\widehat \gota(\lambda)}\in \rho(A)$ for all $\lambda>\widetilde\omega$.

\item The mapping $\lambda\mapsto H(\lambda):=\frac{1}{\lambda}\left(I-\widehat \gota(\lambda))A\right)^{-1}$ satisfies the estimate
\begin{align*}
\|H^{(n)}(\lambda)\|\le \frac{Mn!}{(\lambda-\widetilde\omega)^{n+1}},\;\;\lambda>\widetilde\omega,\; n\in\N_0:=\N\cup\{0\},
\end{align*}
where $H^{(n)}(\lambda)=\frac{d^nH}{d\lambda^n}(\lambda)$.
\end{enumerate}
\end{theorem}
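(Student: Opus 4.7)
The plan is to prove the two directions of this Hille--Yosida--type equivalence separately, taking the Post--Widder inversion formula as the key tool for the sufficiency direction.

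For the necessary direction, assume $A$ generates a resolvent family $(S(t))_{t\ge 0}$ of type $(M,\widetilde\omega)$. Its Laplace transform $\widehat S(\lambda)=\int_0^\infty e^{-\lambda t}S(t)\,dt$ converges in operator norm for $\op(\lambda)>\widetilde\omega$, with $\|\widehat S(\lambda)\|\le M/(\op(\lambda)-\widetilde\omega)$. Taking the Laplace transform of the resolvent equation \eqref{reo-eq} and using the closedness of $A$ to pull it through the integral, together with the commutation $AS(\cdot)x=S(\cdot)Ax$, one gets
\[
\widehat S(\lambda)x=\tfrac{1}{\lambda}x+\widehat{\gota}(\lambda)\,A\widehat S(\lambda)x\qquad (x\in D(A)),
\]
which extends to all of $X$ by density. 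This forces $\widehat{\gota}(\lambda)\ne 0$, $1/\widehat{\gota}(\lambda)\in\rho(A)$, and $\widehat S(\lambda)=H(\lambda)$. Differentiating under the Laplace integral yields $H^{(n)}(\lambda)=(-1)^n\int_0^\infty t^n e^{-\lambda t}S(t)\,dt$, and the growth bound immediately delivers $\|H^{(n)}(\lambda)\|\le M\int_0^\infty t^n e^{-(\lambda-\widetilde\omega)t}\,dt=Mn!/(\lambda-\widetilde\omega)^{n+1}$.

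For sufficiency, assume (1) and (2). The plan is to reconstruct $S$ as the inverse Laplace transform of $H$ via the Post--Widder approximants
\[
S_n(t):=\frac{(-1)^n}{n!}\Bigl(\frac{n}{t}\Bigr)^{n+1}H^{(n)}\!\Bigl(\frac{n}{t}\Bigr),\qquad t>0,\ n>\widetilde\omega t,
\]
and to set $S(t):=\lim_{n\to\infty}S_n(t)$ in the strong operator topology. The derivative bound immediately gives $\|S_n(t)\|\le M(1-\widetilde\omega t/n)^{-(n+1)}\to M e^{\widetilde\omega t}$, which will transfer to $S$. The substeps are: (a) show that $(S_n(t)x)$ is Cauchy in $X$ for every $x$ in a dense, $A$-invariant subspace, using the algebraic identity $\widehat{\gota}(\lambda)AH(\lambda)=\lambda H(\lambda)-I$ in a telescoping argument; (b) extend $S(t)$ to all of $X$ by the uniform bound and density; (c) verify strong continuity together with $S(0)=I$; (d) identify $\widehat S(\lambda)=H(\lambda)$ on $(\widetilde\omega,\infty)$ (hence everywhere) by uniqueness of Laplace transforms and the Post--Widder reconstruction; and (e) recover the resolvent equation \eqref{reo-eq} by applying inverse Laplace to $\lambda H(\lambda)-I=\widehat{\gota}(\lambda)A\,\lambda H(\lambda)$ and invoking the integrability hypothesis \eqref{cond-a}.

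The principal obstacle is substep (a): strong Cauchy convergence of the Post--Widder approximants does not follow from the growth bounds by soft methods alone. In the semigroup case ($\gota\equiv 1$) this is furnished by the classical Yosida approximants and the algebra of bounded exponentials; in the Volterra case one must instead exploit the convolution structure carried by $\gota$ to extract the analogous telescoping identity between successive $S_n$'s. Once (a) is in hand, substeps (b)--(e) are routine consequences of Laplace uniqueness and the defining form of $H$.
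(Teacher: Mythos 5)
First, a point of comparison: the paper does not prove this statement at all --- it is quoted verbatim as \cite[Chapter I, Theorem 1.3]{Pr}, so there is no in-paper argument to measure yours against. Judged on its own terms, your necessity direction is correct and complete: transforming the resolvent equation, using closedness of $A$ and the commutation $AS(\cdot)x=S(\cdot)Ax$ to identify $\widehat S(\lambda)=H(\lambda)$, and differentiating under the integral to get the Widder bounds is exactly the standard argument.

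The sufficiency direction, however, has a genuine gap, and it is the one you yourself flag: substep (a) is not a technical loose end but the entire content of the theorem. In a general Banach space the Widder estimates $\|H^{(n)}(\lambda)\|\le Mn!/(\lambda-\widetilde\omega)^{n+1}$ do \emph{not} imply that $H$ is the Laplace transform of a strongly continuous operator family --- this is precisely the failure of the vector-valued Widder theorem in spaces without the Radon--Nikodym property --- so the Post--Widder approximants $S_n(t)x$ cannot be shown to converge from the growth bounds alone, and no "telescoping identity between successive $S_n$'s" is available in the abstract. The proof that actually works (and the one in Pr\"uss) runs differently: one first applies the Widder--Arendt real representation theorem to conclude that $H(\lambda)x=\lambda\widehat{F_x}(\lambda)$ for a Lipschitz primitive $F_x$ with $F_x(0)=0$; then, for $x\in D(A)$, one uses the algebraic identity $H(\lambda)x=\frac{1}{\lambda}x+\widehat{\gota}(\lambda)H(\lambda)Ax$ to write $F_x(t)=tx+(\gota\ast F_{Ax})(t)$, and it is the convolution with $\gota\in L^1_{\mathrm{loc}}$ against the Lipschitz function $F_{Ax}$ that supplies the missing differentiability of $F_x$ and hence the definition $S(t)x:=\frac{d}{dt}F_x(t)$ on $D(A)$, extended by density. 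Your sketch gestures at "exploiting the convolution structure" but never produces this regularization step, which is where the theorem is actually proved. A secondary omission: in substep (e) you would also need to establish that $S(t)$ maps $D(A)$ into $D(A)$ and commutes with $A$ before the resolvent equation \eqref{reo-eq} can be recovered; this does not follow from Laplace uniqueness alone.
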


The following result will be also useful.

\begin{lemma}\label{lem1}
Let $\alpha\in\mathbb{R}$, $\beta\ge 0$,  $0<\mu\leq 1$ and define $g(\lambda):=\frac{(\lambda+\beta)^\mu}{(\lambda+\beta)^\mu+\alpha}$ for all $\lambda$ with $\op(\lambda)>0$. Then the following assertions hold.
\begin{enumerate}
  \item If $\alpha\geq 0,$ then $|g(\lambda)|\leq 1$ .
  \item If $\alpha<0$ and $\alpha+\beta^\mu\ge |\alpha|,$ then $|g(\lambda)|\leq \frac{\beta^\mu}{\alpha+\beta^\mu}\le  \frac{\beta^\mu}{|\alpha|}$.
\end{enumerate}
\end{lemma}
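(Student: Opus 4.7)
The plan is to reduce both assertions to elementary estimates for the single complex variable $z:=(\lambda+\beta)^\mu$, after which $g(\lambda)=z/(z+\alpha)$. First I would record two geometric facts that hold on the whole half-plane $\op(\lambda)>0$. On the one hand, $\op(z)>0$: since $\op(\lambda)>0$ and $\beta\ge 0$, the point $\lambda+\beta$ lies in the open right half-plane, so $|\arg(\lambda+\beta)|<\pi/2$ and hence $|\arg z|=\mu\,|\arg(\lambda+\beta)|<\mu\pi/2\le\pi/2$. On the other hand, $|z|\ge\beta^\mu$, which is immediate from $|\lambda+\beta|^2=|\lambda|^2+2\beta\op(\lambda)+\beta^2\ge\beta^2$.

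For assertion (1), with $\alpha\ge 0$, I would simply expand
\[
|z+\alpha|^2=(\op(z)+\alpha)^2+(\operatorname{Im} z)^2=|z|^2+2\alpha\op(z)+\alpha^2,
\]
and observe that $\alpha\op(z)\ge 0$ and $\alpha^2\ge 0$ force $|z+\alpha|^2\ge|z|^2$, i.e.\ $|g(\lambda)|\le 1$.

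For assertion (2), the hypothesis $\alpha+\beta^\mu\ge|\alpha|$ together with $\alpha<0$ rearranges to $\beta^\mu\ge 2|\alpha|$; combined with the bound $|z|\ge\beta^\mu$ from the setup, this gives $|z|\ge\beta^\mu>|\alpha|$. The reverse triangle inequality then yields $|z+\alpha|\ge |z|-|\alpha|>0$, and since the map $r\mapsto r/(r-|\alpha|)$ is strictly decreasing on $(|\alpha|,\infty)$, I would conclude
\[
|g(\lambda)|\le \frac{|z|}{|z|-|\alpha|}\le \frac{\beta^\mu}{\beta^\mu-|\alpha|}=\frac{\beta^\mu}{\alpha+\beta^\mu}.
\]
The remaining inequality $\beta^\mu/(\alpha+\beta^\mu)\le\beta^\mu/|\alpha|$ is immediate from the standing hypothesis $\alpha+\beta^\mu\ge|\alpha|$.

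There is no real conceptual obstacle here; the only delicate point worth flagging is that the arithmetic condition $\alpha+\beta^\mu\ge|\alpha|$ in (2) is exactly what one needs in order to guarantee $|z|>|\alpha|$, ensuring both that the reverse triangle inequality produces a strictly positive lower bound and that the decreasing comparison function $r/(r-|\alpha|)$ is well defined at $r=|z|$. Everything else is routine complex arithmetic on the principal branch $(\cdot)^\mu$ restricted to the right half-plane.
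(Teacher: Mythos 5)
Your proof is correct, and for part (2) it takes a genuinely different route from the paper's. The paper writes $g(\lambda)=1-\alpha/((\lambda+\beta)^\mu+\alpha)$ and bounds the denominator from below through its \emph{real part}: it first proves the inequality $\op\bigl[(\lambda+\beta)^\mu\bigr]\ge(\op(\lambda)+\beta)^\mu$, which reduces to $\cos(\mu\theta)\ge\cos^\mu(\theta)$ and is established via a concavity argument on $f(\mu)=\cos(\mu\theta)-\cos^\mu(\theta)$; from this it deduces $\op\bigl[(\lambda+\beta)^\mu-\beta^\mu\bigr]>0$ and hence $|(\lambda+\beta)^\mu+\alpha|>\alpha+\beta^\mu$. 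You instead work purely with moduli: the elementary bound $|(\lambda+\beta)^\mu|=|\lambda+\beta|^\mu\ge\beta^\mu$, the reformulation of the hypothesis as $\beta^\mu\ge 2|\alpha|$, the reverse triangle inequality $|z+\alpha|\ge|z|-|\alpha|>0$, and the monotonicity of $r\mapsto r/(r-|\alpha|)$. Both arguments land on the same lower bound $\beta^\mu+\alpha$ for the denominator and the same constant $\beta^\mu/(\alpha+\beta^\mu)$. Your version is more elementary in that it avoids any statement about real parts of fractional powers (and hence the concavity lemma entirely); the paper's real-part inequality \eqref{In-Eq} is the stronger piece of information and is reused later in the proof of Lemma \ref{lem-h} (via \eqref{eqA.1}), which is presumably why the authors set things up that way. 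Your treatment of part (1) via $|z+\alpha|^2=|z|^2+2\alpha\op(z)+\alpha^2\ge|z|^2$ is the standard argument the paper dismisses as obvious.
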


\begin{proof}
Let $\alpha\in\mathbb{R}$, $\beta\ge 0$ and $0<\mu\leq 1$.

(a) This assertion is obvious.

(b) Now assume that $\alpha<0$ and $\beta^\mu+\alpha\ge |\alpha|$.  First, observe that
\begin{align}\label{A1}
|g(\lambda)|=\left|1-\frac{\alpha}{(\lambda+\beta)^\mu+\alpha}\right|
\leq1+\frac{|\alpha|}{|(\lambda+\beta)^\mu-\beta^\mu+(\alpha+\beta^\mu)|}.
\end{align}
We claim that
\begin{align}\label{claim}
{\rm Re}[(\lambda+\beta)^\mu-\beta^\mu]>0\;\mbox{ for all }\; \op(\lambda)>0.
\end{align}
We show first that
\begin{align}\label{In-Eq}
\op[(\lambda+\beta)^\mu]>({\rm Re}(\lambda)+\beta)^\mu\;\mbox{ for all } \op(\lambda)>0.
\end{align}
 Indeed, let $z:=\lambda+\beta.$ We have to show that ${\rm Re}(z^\mu)>({\rm Re}(z))^\mu,$ that is, $|z|^\mu\cos(\theta\mu)>(|z|\cos(\theta))^\mu,$ where $z=|z|\cos(\theta)$ with $|\theta|<\frac{\pi}{2}$. Let $f(\mu):=\cos(\mu\theta)-\cos^\mu(\theta),$ $0\le \mu\leq 1.$ Then
\begin{align*}
f''(\mu)=-\theta^2\cos(\mu\theta)-(\ln(\cos(\theta)))^2\cos^\mu(\theta)<0\;\mbox{ for all }\; 0<\mu\leq 1,
\end{align*}
and this implies that $f$ is a concave function. Since $f(0)=f(1)=0$, we have that the graph of $f$ is above the straight line joining the points $(0,f(0))=(0,0)$ and $(1,f(1))=(1,0).$ This implies that $f(\mu)\geq 0$ for all $0\leq\mu\leq 1$. Thus $\cos(\mu\theta)\geq\cos^\mu(\theta)$ for all $0\le \mu\leq 1$ and we have shown \eqref{In-Eq}. The claim \eqref{claim} follows from \eqref{In-Eq}.

On the other hand, since by assumption $\alpha+\beta^\mu\ge|\alpha|>0$, it follows from \eqref{claim} that
\begin{align}\label{eqA.1}
|{\rm Re}[(\lambda+\beta)^\mu-\beta^\mu+(\alpha+\beta^\mu)]|>\alpha+\beta^\mu.
\end{align}
Using \eqref{eqA.1}, we get from \eqref{A1} that
\begin{align*}
|g(\lambda)|\leq 1+\frac{|\alpha|}{\alpha+\beta^\mu}=\frac{\beta^\mu}{\alpha+\beta^\mu}\le \frac{\beta^\mu}{|\alpha|},
\end{align*}
and the proof is finished.
\end{proof}

We also need the following result.

\begin{lemma}\label{lem-h}
Let $\alpha\in\mathbb{R}$, $\alpha\ne 0$, $\beta\ge 0$,  $0<\mu\leq 1$ and define the function
\begin{align}\label{func-h}
h(\lambda)=\frac{\lambda(\lambda+\beta)^\mu}{(\lambda+\beta)^\mu+\alpha},\;\lambda\in\C,\;\op(\lambda)>0.
\end{align}
Assume any one of the following two conditions.
\begin{enumerate}
  \item $\alpha>0$.
  \item $\alpha<0$ and $\alpha+\beta^\mu\ge |\alpha|$.
\end{enumerate}
Then
\begin{align}\label{arg}
|\arg(h(\lambda))|\le (1+\mu)|\arg(\lambda)|.
\end{align}
\end{lemma}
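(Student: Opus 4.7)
The plan is to factor $h(\lambda)=\lambda\,g(\lambda)$, where $g$ is the auxiliary function from Lemma \ref{lem1}, so that $\arg(h(\lambda))=\arg(\lambda)+\arg(g(\lambda))$, and then prove the sharper estimate $|\arg(g(\lambda))|\le\mu|\arg(\lambda)|$, from which the target bound \eqref{arg} follows by the triangle inequality. Using the conjugation symmetry $h(\overline\lambda)=\overline{h(\lambda)}$ I reduce to $\theta:=\arg(\lambda)\in(0,\pi/2)$, and set $w:=(\lambda+\beta)^\mu$, $r:=|w|$, $\psi:=\arg(w)$. Because adding the nonnegative real $\beta$ drags $\lambda$ toward the positive real axis, $0<\arg(\lambda+\beta)\le\theta$, so $\psi=\mu\arg(\lambda+\beta)\in(0,\mu\theta]$.

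For case (a), $\alpha>0$, a direct computation gives
\begin{equation*}
\tan(\arg(w+\alpha))=\frac{r\sin\psi}{r\cos\psi+\alpha}<\tan\psi,
\end{equation*}
so $0<\arg(w+\alpha)<\psi$ and therefore $|\arg(g(\lambda))|=\psi-\arg(w+\alpha)\le\psi\le\mu\theta$. Case (b), $\alpha<0$, is the main obstacle: now $w+\alpha$ moves \emph{away} from the positive real axis and one has $\arg(w+\alpha)>\psi$. Two ingredients rescue the estimate: (i) the inequality $\op(w)>\beta^\mu$, which follows from \eqref{In-Eq} in the proof of Lemma \ref{lem1} together with $\op(\lambda+\beta)>\beta$; and (ii) the hypothesis $\alpha+\beta^\mu\ge|\alpha|$ rewritten as $\beta^\mu\ge 2|\alpha|$. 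Together they force $\op(w)=r\cos\psi>2|\alpha|$, hence $r>2|\alpha|$, so $r-|\alpha|\cos\psi\ge r-|\alpha|\ge|\alpha|$. The tangent-subtraction identity then yields
\begin{equation*}
\tan\bigl(\arg(w+\alpha)-\psi\bigr)=\frac{|\alpha|\sin\psi}{r-|\alpha|\cos\psi}\le\sin\psi\le\tan\psi,
\end{equation*}
so $\arg(w+\alpha)-\psi\le\psi$ and thus $|\arg(g(\lambda))|\le\psi\le\mu\theta$, completing the reduction.

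The delicate point is case (b): the lower bound $r-|\alpha|\cos\psi\ge|\alpha|$ just barely survives, and the hypothesis $\beta^\mu\ge 2|\alpha|$ is used \emph{sharply} at exactly that step. Any alternative parametrization (for instance, working with $z:=\lambda+\beta$ from the start) would ultimately have to prove the same tangent estimate, so I expect the author's argument to follow essentially this structure.
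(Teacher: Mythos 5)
Your proof is correct, but it follows a genuinely different route from the paper's. The paper writes $\arg(h(re^{i\theta}))={\rm Im}\int_0^\theta \lambda h'(\lambda)/h(\lambda)\,dt$ along the circular arc $\lambda=re^{it}$, computes the logarithmic derivative
\begin{equation*}
\lambda\frac{h'(\lambda)}{h(\lambda)}=1+\frac{\alpha\mu}{(\lambda+\beta)^\mu+\alpha}\cdot\frac{\lambda}{\lambda+\beta},
\end{equation*}
and bounds its modulus by $1+\mu$ (in case (b) via the lower bound $|(\lambda+\beta)^\mu+\alpha|\ge\alpha+\beta^\mu$ from \eqref{eqA.1}); integrating over $t\in[0,\theta]$ then gives \eqref{arg} directly. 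You instead split $h=\lambda g$ and prove the pointwise angle bound $|\arg(g(\lambda))|\le\mu|\arg(\lambda)|$ by elementary trigonometry, which I have checked: in case (a) the denominator rotates toward the real axis so $0<\arg(w+\alpha)<\psi$; in case (b) the identity $\tan(\arg(w+\alpha)-\psi)=|\alpha|\sin\psi/(r-|\alpha|\cos\psi)$ is exactly $\arg\bigl((w+\alpha)\overline{w}\bigr)$, and your chain $\op(w)>(\op(\lambda)+\beta)^\mu>\beta^\mu\ge2|\alpha|$ correctly delivers $r-|\alpha|\cos\psi\ge|\alpha|$, hence $\tan(\arg(w+\alpha)-\psi)\le\sin\psi\le\tan\psi$. (The $\theta=0$ case, which your reduction skips, is trivial since $h(\lambda)>0$ for $\lambda>0$ under either hypothesis.) Your approach is more elementary --- no contour integration of $\log h$ --- and yields the sharper conclusion $|\arg(h(\lambda))|\le|\arg(\lambda)|$ in case (b), since there $\arg(g)$ has the opposite sign to $\arg(\lambda)$. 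The paper's approach is shorter once the logarithmic-derivative identity is accepted, and has the advantage of being reusable verbatim for $\widetilde h$ in Lemma \ref{lem-ex}, where the relevant $\lambda$ lie in the left half-plane and your geometric case analysis would need to be redone. Your closing guess that the authors ``would ultimately have to prove the same tangent estimate'' is not borne out: their argument never isolates $\arg(g)$ at all.
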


\begin{proof}
Let $h$ be given  by \eqref{func-h} where $\lambda=re^{i\theta}$ with $|\theta|<\frac{\pi}{2}$ and $r>0$. Without any restriction we may assume that $\theta\ge 0$.
Then
\begin{align}\label{cal-arg}
\arg(h(re^{i\theta}))=&{\rm Im}({\ln}(h(re^{i\theta})))={\rm Im}\int_0^\theta\frac{d}{dt}{\ln}(h(re^{it}))dt\notag\\
=&{\rm Im}\int_0^\theta\frac{h'(re^{it})ire^{it}}{h(re^{it})}dt.
\end{align}
Moreover a simple calculation gives
\begin{align*}
\lambda\frac{h'(\lambda)}{h(\lambda)}=1+\left(\frac{\alpha\mu}{(\lambda+\beta)^\mu+\alpha}\right)\left(\frac{\lambda}{\lambda+\beta}\right).
\end{align*}
(a) Assume that $\alpha> 0$. Then
\begin{align}\label{cal-arg-2}
\left|\lambda\frac{h'(\lambda)}{h(\lambda)}\right|\leq 1+\mu \left|\frac{\alpha}{(\lambda+\beta)^\mu+\alpha}\right| \left|\frac{\lambda}{\lambda+\beta}\right|\leq 1+\mu.
\end{align}
Using \eqref{cal-arg-2} we get from \eqref{cal-arg} that
\begin{align*}
\left|{\rm Im}\int_0^\theta\frac{h'(re^{it})ire^{it}}{h(re^{it})}dt\right|\leq\int_0^\theta\left|\frac{h'(re^{it})ire^{it}}{h(re^{it})}\right|dt
\leq \int_0^\theta (1+\mu)dt\leq (1+\mu)\theta,
\end{align*}
and we have shown \eqref{arg}.

(b) Now assume that $\alpha<0$ and $\beta^\mu+\alpha\ge |\alpha|$. Then using \eqref{eqA.1} we get that
\begin{align}\label{cal-arg-3}
\left|\lambda\frac{h'(\lambda)}{h(\lambda)}\right|\leq 1+\mu \left|\frac{\alpha}{(\lambda+\beta)^\mu+\alpha}\right| \left|\frac{\lambda}{\lambda+\beta}\right|\leq 1+\frac{\mu|\alpha|}{\alpha+\beta^\mu}\le 1+\mu.
\end{align}
It follows from \eqref{cal-arg} and \eqref{cal-arg-3} that
\begin{align*}
\left|{\rm Im}\int_0^\theta\frac{h'(re^{it})ire^{it}}{h(re^{it})}dt\right|&\leq\int_0^\theta\left|\frac{h'(re^{it})ire^{it}}{h(re^{it})}\right|dt\\
&\leq \int_0^\theta 1+\frac{\mu|\alpha|}{\alpha+\beta^\mu}dt\leq
(1+\mu)\theta.
\end{align*}
We have shown \eqref{arg} and the proof is finished.
\end{proof}

\begin{proof}[\bf Proof of Theorem \ref{generation}]
First we note that compare with Theorem \ref{theo-Pruss} we have that
\begin{align*}
\gota(t)=(1+1\ast \kappa)(t),\;\;t>0,
\end{align*}
so that its Laplace transform is given by
\begin{align*}
\widehat\gota(\lambda)=\frac{1}{\lambda}+\frac{\widehat \kappa(\lambda)}{\lambda}=\frac{(\lambda+\beta)^\mu+\alpha}{\lambda(\lambda+\beta)^\mu}.
\end{align*}
It is clear that for every $\widetilde\omega>0$ we have that
\begin{align*}
\int_0^\infty e^{-\widetilde\omega t}|\gota(t)|\;dt=\int_0^\infty e^{-\widetilde\omega t}|(1+1\ast \kappa)(t)|\;dt<\infty.
\end{align*}
Hence, we have to show that the two conditions in Theorem \ref{theo-Pruss} are satisfied.  It is easy to see that under the assumptions (a) or (b) we have that $\widehat\gota(\lambda)\ne 0$ for all $\lambda$ with $\mbox{Re}(\lambda)>0$.

Next, we claim that
\begin{equation}\label{eq-res}
\frac{1}{\widehat\gota(\lambda)}=h(\lambda)=\frac{ \lambda(\lambda+\beta)^\mu}{(\lambda+\beta)^\mu+\alpha}\in\rho(A)\;\;\mbox{ for all }\;\lambda\;\mbox{ with }\;\op(\lambda)>0.
\end{equation}
It follows from Lemma \ref{lem-h} that in both cases (a) and (b), we have that
\begin{align*}
h(\lambda)\in \Sigma_{\frac{\pi\mu}{2}}\;\mbox{ for all }\;\lambda\;\mbox{ with }\;\op(\lambda)>0.
\end{align*}
This implies that
the function $H(\lambda)=g(\lambda)(h(\lambda)-A)^{-1}$ is well defined, where  $g(\lambda)$ is given by
\begin{align*}
g(\lambda)=\frac{(\lambda+\beta)^\mu}{(\lambda+\beta)^\mu+\alpha}=\frac{h(\lambda)}{\lambda }.
\end{align*}
Since $A$ is a sectorial operator of angle $\mu \frac{\pi}{2}$ (in both cases (a) and (b)),  we have that there exists a constant $M>0$ such that for all $\lambda$ with $\op(\lambda)>0,$
\begin{align}\label{A2}
\|\lambda H(\lambda)\|= |h(\lambda)|\left\|(h(\lambda)-A)^{-1}\right\|\leq M\frac{|h(\lambda)|}{|h(\lambda)|}= M.
\end{align}
Moreover a simple calculation gives that in both cases,
\begin{align*}
\lambda^2H'(\lambda)=&\mu\tfrac{\lambda}{\lambda+\beta}\lambda H(\lambda)-\mu\tfrac{\lambda}{\lambda+\beta}g(\lambda)\lambda H(\lambda)-\lambda^2 H(\lambda)^2\\
&-\mu\tfrac{\lambda}{\lambda+\beta}\lambda^2 H(\lambda)^2+g(\lambda)\lambda^2 H(\lambda)^2\mu \tfrac{\lambda}{\lambda+\beta}.
\end{align*}
Note that in the case (b) we have that $\alpha+\beta^\mu\ge (1+\mu)\alpha+\beta^\mu\ge |\alpha|+\mu\alpha>0$.
Since the function $g$ is bounded (by Lemma \ref{lem1}), then using \eqref{A2}, we get that there exists a constant $M_1>0$ such that
\begin{align}\label{A3}
\|\lambda^2 H'(\lambda)\|\leq M_1\;\;\mbox{ for all }\;\lambda\;\mbox{ with }\;\op(\lambda)>0.
\end{align}
Combining \eqref{A2} and \eqref{A3} we get that there exists a constant $M>0$ such that
\begin{align}\label{A4}
\|\lambda H(\lambda)+\|\lambda^2 H'(\lambda)\|\leq M\;\;\mbox{ for all }\;\lambda\;\mbox{ with }\;\op(\lambda)>0.
\end{align}
By \cite[Proposition 0.1]{Pr}, the estimate \eqref{A4} implies that
\begin{align}\label{A5}
\|H^{(n)}(\lambda)\|\le \frac{Mn!}{\lambda^{n+1}},\;\;\;\;\forall\;\lambda>0,\;n\in\N_0.
\end{align}
From \eqref{A5} we also have that for every $\widetilde\omega>0$,
\begin{align*}
\|H^{(n)}(\lambda)\|\le \frac{Mn!}{(\lambda-\widetilde\omega)^{n+1}},\;\;\;\;\forall\;\lambda>\widetilde\omega,\;n\in\N_0.
\end{align*}
Finally, it follows from Theorem \ref{theo-Pruss} that $A$ generates an $(\alpha,\beta,\mu)$-resolvent family $(S_{\alpha,\beta}^\mu(t))_{t\geq 0}$ of type $(M,\widetilde\omega)$ and the proof is finished.
\end{proof}

\begin{proof}[\bf Proof of Corollary \ref{cor-generation}]
Assume that $\alpha>0$, or $\alpha<0$ and $\alpha+\beta^\mu\ge |\alpha|$ and that $A$ is $\omega$-sectorial of angle $\mu\frac{\pi}{2}$. The claim follows from the decomposition $A=(\omega I+A)-\omega I$, using Theorem \ref{generation} and the perturbation result of resolvent families contained in \cite[Corollary 3.2]{LiSa}.
\end{proof}

We make some comments about the results obtained in Theorem \ref{generation} and Corollary \ref{cor-generation}.

\begin{remark}
We notice that even if we assume that the operator $A$ is $\omega$-sectorial for some $\omega<0$, then uniform exponential stability of the family $(S_{\alpha,\beta}^\mu(t))_{t\geq 0}$ cannot be derived from Theorem \ref{generation} and Corollary \ref{cor-generation}. In fact, the integral condition \eqref{cond-a} on the kernel $\gota(t)=(1+1\ast \kappa)(t)$ is only satisfied for $\widetilde\omega>0$. Therefore some new ideas are needed in the study of the uniform exponential stability of the family which is one of the main goals of the present paper.
\end{remark}

\section{Proof of Theorems \ref{th2-P}}\label{sec-pr-2th}

In this section we give the proof of Theorems \ref{th2-P}. For this we need the following result.

\begin{lemma}\label{lem-ex}
Let $\beta\ge 0$, $\alpha\in\R$, $\alpha\ne 0$, $0<\mu\le 1$ and define the function
\begin{align}\label{h-til}
\widetilde h(\lambda):=\frac{(\lambda-\beta)\lambda^\mu}{\lambda^\mu+\alpha}.
\end{align}
If $\op(\lambda)<0$ and $|\lambda^{\mu}|\ge 2|\alpha|$, then
\begin{align}\label{argu}
|\arg(\widetilde  h(\lambda))|\le (1+\mu)|\arg(\lambda)|.
\end{align}
\end{lemma}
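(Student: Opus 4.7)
My plan is to follow the template of the proof of Lemma \ref{lem-h}: derive a clean expression for the logarithmic derivative $\lambda \widetilde h'(\lambda)/\widetilde h(\lambda)$, show it is bounded by $1+\mu$ on the region $\{\op(\lambda) < 0,\ |\lambda^\mu| \ge 2|\alpha|\}$, and then integrate this bound along a circular arc of fixed modulus.

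The algebraic step starts from $\log \widetilde h(\lambda) = \log(\lambda - \beta) + \mu \log \lambda - \log(\lambda^\mu + \alpha)$; differentiating and multiplying by $\lambda$ yields, after the simplification $\mu - \mu\lambda^\mu/(\lambda^\mu+\alpha) = \mu\alpha/(\lambda^\mu+\alpha)$,
$$\lambda \frac{\widetilde h'(\lambda)}{\widetilde h(\lambda)} = \frac{\lambda}{\lambda - \beta} + \frac{\mu \alpha}{\lambda^\mu + \alpha}.$$
For the first summand, the inequality $|\lambda/(\lambda-\beta)| \le 1$ is equivalent to $2\beta \op(\lambda) \le \beta^2$ and follows at once from $\op(\lambda) < 0 \le \beta/2$. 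For the second summand, the reverse triangle inequality combined with $|\lambda^\mu|\ge 2|\alpha|$ gives $|\lambda^\mu + \alpha| \ge |\lambda^\mu| - |\alpha| \ge |\alpha|$, whence $|\mu\alpha/(\lambda^\mu+\alpha)| \le \mu$. Together these yield the pointwise bound $|\lambda \widetilde h'(\lambda)/\widetilde h(\lambda)| \le 1 + \mu$.

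Writing $\lambda = re^{i\theta}$ with $r^\mu \ge 2|\alpha|$ and, by complex-conjugation symmetry, $\theta \in (\pi/2, \pi]$, I would parametrize the arc $\gamma(t) = re^{it}$, $t \in [0, \theta]$. On $\gamma$, $\widetilde h$ is analytic and nonvanishing: the candidate poles $\lambda^\mu = -\alpha$ are excluded by $r^\mu \ge 2|\alpha|$, while the only zero $\lambda = \beta$ can occur at most at the base point $t = 0, r = \beta$ and is handled by continuity in $r$. A continuous branch of $\arg \widetilde h$ along $\gamma$ therefore satisfies
$$\arg \widetilde h(re^{i\theta}) - \arg \widetilde h(r) = \operatorname{Im} \int_0^\theta \frac{\widetilde h'(re^{it})\,ire^{it}}{\widetilde h(re^{it})}\,dt,$$
and the pointwise bound on the integrand gives $|\arg \widetilde h(re^{i\theta}) - \arg \widetilde h(r)| \le (1+\mu)\theta$.

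The main obstacle is the treatment of the base argument $\arg \widetilde h(r)$. Since $r^\mu + \alpha \ge |\alpha| > 0$ throughout the region of interest, $\widetilde h(r) = (r-\beta)r^\mu/(r^\mu + \alpha)$ has the same sign as $r - \beta$. In the regime $r \ge \beta$, $\arg \widetilde h(r) = 0$ and the estimate \eqref{argu} is immediate. The regime $r < \beta$ is more subtle: $\widetilde h(r) < 0$, so the base value of the continuous branch is $\pm \pi$ determined by the sign of $\operatorname{Im} \widetilde h(re^{it})$ for small $t > 0$. Combining the total-variation bound $(1+\mu)\theta$ with the reduction modulo $2\pi$ to the principal branch $(-\pi,\pi]$ and the fact that $\theta > \pi/2$ forces $(1+\mu)\theta > (1+\mu)\pi/2$, one can conclude the required bound after a careful case analysis, although rigorously verifying the branch-selection and wrap-around step is where I expect the argument to deviate most substantively from that of Lemma \ref{lem-h} and may benefit from a contour different from the pure circular arc.
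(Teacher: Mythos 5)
Your computational core---the identity
\[
\lambda\,\frac{\widetilde h'(\lambda)}{\widetilde h(\lambda)}=\frac{\lambda}{\lambda-\beta}+\frac{\mu\alpha}{\lambda^{\mu}+\alpha},
\]
the pointwise bound $1+\mu$ coming from $|\lambda-\beta|\ge|\lambda|$ (valid since $\op(\lambda)<0\le\beta$) and from $|\lambda^{\mu}+\alpha|\ge|\lambda^{\mu}|-|\alpha|\ge|\alpha|$, and the integration of this bound along the circular arc $re^{it}$, $0\le t\le\theta$ (on which $|\lambda^{\mu}|=r^{\mu}\ge 2|\alpha|$ is preserved)---is exactly the paper's proof. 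The base-point issue you isolate is not a defect of your write-up relative to the paper: the paper writes $\arg(\widetilde h(re^{i\theta}))=\operatorname{Im}\int_0^\theta \widetilde h'(re^{it})ire^{it}\,\widetilde h(re^{it})^{-1}\,dt$, which silently assumes $\arg\widetilde h(r)=0$, i.e.\ $\widetilde h(r)>0$. Since $r^{\mu}+\alpha\ge|\alpha|>0$ on the region considered, this holds precisely when $r>\beta$, and the case $r\le\beta$ is never addressed in the paper.

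However, the ``careful case analysis'' you defer to cannot succeed, because \eqref{argu} is false when $|\lambda|$ is small compared with $\beta$. If $|\lambda|\ll\beta$, then $\arg(\lambda-\beta)$ is within $\arctan(|\lambda|/\beta)$ of $\pm\pi$ regardless of $\arg(\lambda)$, while $|\arg(\lambda^{\mu})-\arg(\lambda^{\mu}+\alpha)|\le\arcsin(|\alpha|/|\lambda^{\mu}|)\le\pi/6$ under $|\lambda^{\mu}|\ge 2|\alpha|$; hence $|\arg\widetilde h(\lambda)|$ is forced close to $\pi$, whereas $(1+\mu)|\arg(\lambda)|$ can be taken just above $(1+\mu)\pi/2<\pi$. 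Concretely, $\mu=\tfrac12$, $\alpha=10^{-2}$, $\beta=100$, $\lambda=e^{i(\pi/2+10^{-2})}$ satisfies all hypotheses and gives $|\arg\widetilde h(\lambda)|\approx 3.139$ versus $(1+\mu)|\arg\lambda|\approx 2.371$, with either the principal branch or the branch continued from $t=0$ along your arc. So the honest output of your (and the paper's) argument is the lemma under the additional hypothesis $|\lambda|\ge\beta$: for $|\lambda|>\beta$ one has $\widetilde h(|\lambda|)>0$ and your arc integral closes the proof (and one checks that reducing the continued argument to the principal branch can only decrease its modulus below $(1+\mu)\theta$), with $|\lambda|=\beta$ following by continuity. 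Whether this restricted statement still covers all $\lambda$ on the contours used in the proof of Theorem \ref{th2-P} is a separate question that the application would then have to address.
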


\begin{proof}
We proceed as in the proof of Lemma \ref{lem-h}.  Let $\widetilde h$ be given by \eqref{h-til} with $\lambda=re^{i\theta}$ where $|\theta|> \frac{\pi}{2}$ and $r^\mu\ge 2|\alpha|$. Without any restriction we may assume that $\theta>\frac{\pi}{2}$. A simple calculation gives that
\begin{align}\label{cal-1}
\lambda\frac{\widetilde h'(\lambda)}{\widetilde h(\lambda)}=\frac{\lambda}{\lambda-\beta}+\mu\frac{\alpha}{\lambda^\mu+\alpha}.
\end{align}
Recall that
\begin{align*}
{\rm arg}(\widetilde h(re^{i\theta}))={\rm Im}({\ln}(\widetilde h(re^{i\theta})))={\rm Im}\int_0^\theta\frac{\widetilde h'(re^{it})ire^{it}}{\widetilde h(re^{it})}dt.
\end{align*}
Since $\op(\lambda)<0$ and $|\lambda^\mu|\ge 2|\alpha|$, we have that $|\lambda^\mu+\alpha|\ge \left||\lambda^\mu|-|\alpha|\right|\ge |\alpha|$. Thus it follows from \eqref{cal-1} that
\begin{align*}
\left|\lambda\frac{\widetilde h'(\lambda)}{\widetilde h(\lambda)}\right|\leq 1+\mu.
\end{align*}
Therefore
\begin{align*}
\left|{\rm Im}\int_0^\theta\frac{h'(re^{it})ire^{it}}{h(re^{it})}dt\right|\leq\int_0^\theta\left|\frac{h'(re^{it})ire^{it}}{h(re^{it})}\right|dt
\leq \int_0^\theta (1+\mu)dt\leq (1+\mu)\theta,
\end{align*}
and the proof is finished.
\end{proof}

\begin{proof}[\bf Proof of Theorem \ref{th2-P}]
Let $\beta\ge 0$,  $\alpha\in\R$, $\alpha\ne 0$, $0<\mu<\widetilde\mu\leq 1$ and $\omega<0$.

(a) Assume that $\alpha>0$. Since $A$ is $\omega$-sectorial of angle $0<\theta\le \widetilde\mu \frac{\pi}{2}$, it follows from Theorem \ref{generation} and Corollary \ref{cor-generation} that $A$ generates an $(\alpha,\beta,\mu)$-resolvent family $(S_{\alpha,\beta}^\mu(t))_{t\geq 0}$. Recall the function $h$ given by
\begin{align*}
h(\lambda):=\frac{\lambda(\lambda+\beta)^\mu}{(\lambda+\beta)^\mu+\alpha}.
\end{align*}
We have that for every $\lambda\in\C$ such that $h(\lambda)\in \omega+\Sigma_\theta$,
the resolvent $\left(\frac{\lambda(\lambda+\beta)^\mu}{(\lambda+\beta)^\mu+\alpha}I-A\right)^{-1}$ exists
and the Laplace transform of $S_{\alpha,\beta}^\mu(t)$ is given by
\begin{align}\label{J1}
\widehat S_{\alpha,\beta}^\mu(\lambda)=\frac{(\lambda+\beta)^\mu}{(\lambda+\beta)^\mu+\alpha}\left(\frac{\lambda(\lambda+\beta)^\mu}{(\lambda+\beta)^\mu+\alpha}I-A\right)^{-1}.
\end{align}
By the uniqueness of the Laplace transform, it follows from \eqref{J1} that
\begin{align}\label{uniq-lp}
S_{\alpha,\beta}^\mu(t)=e^{-\beta t}G_{\alpha,\beta}^\mu(t)\;\mbox{ for all }\; t\ge 0,
\end{align}
where the Laplace transform of $G_{\alpha,\beta}^\mu(t)$ is given, for all $\lambda\in\C$ such that $h(\lambda-\beta) \in \omega+\Sigma_\theta$, by
\begin{align*}
\widehat G_{\alpha,\beta}^\mu(\lambda)=\frac{\lambda^\mu}{\lambda^\mu+\alpha}\left(\frac{(\lambda-\beta)\lambda^\mu}{\lambda^\mu+\alpha}I-A\right)^{-1}.
\end{align*}
Let
\begin{align*}
h(\lambda-\beta)=\widetilde h(\lambda)=\frac{\lambda^\mu(\lambda-\beta)}{\lambda^\mu+\alpha}.
\end{align*}
There exists a constant $M>0$ such that for all $\lambda\in\C$ with $\widetilde h(\lambda)\in  \omega+\Sigma_\theta$, we have the estimate
\begin{align}\label{eq1.1-P}
\left\|\left(\frac{(\lambda-\beta)\lambda^\mu}{\lambda^\mu+\alpha}I-A\right)^{-1}\right\|\leq& \frac{M}{\left|\frac{(\lambda-\beta)\lambda^\mu}{\lambda^\mu+\alpha}-\omega\right|}\notag\\
=&\frac{M|\lambda^\mu+\alpha|}{|\lambda^{\mu+1}-(\beta+\omega)\lambda^\mu-\alpha\omega|}.
\end{align}
Note that $\alpha\omega<0$ and by hypothesis $-(\omega+\beta)\ge 0$.
Let
\begin{align*}
g(t)=t^{\mu+1}, \;\;t>0.
\end{align*}
We exploit some ideas from the proof of \cite[Theorem 1]{Cu-07}. Recall that $A$ is $\omega$-sectorial of angle $\theta=\widetilde\mu\frac{\pi}{2}$.
Using the inversion formula for the Laplace transform, we have that
\begin{align}\label{eq1.2-P}
G_{\alpha,\beta}^\mu(t)=\frac{1}{2\pi i}\int_\gamma e^{\lambda t} \frac{\lambda^\mu}{\lambda^\mu+\alpha}\left(\frac{(\lambda-\beta)\lambda^\mu}{\lambda^\mu+\alpha}I-A\right)^{-1}d\lambda,
\end{align}
where $\gamma$ is a positively oriented path whose support $\Gamma$ is given by
\begin{align*}
\Gamma:=\{\lambda:\;\lambda\in\C,\;\;\;\lambda^{\mu+1}\;\mbox{ belongs to the boundary of }\; B_{\frac{1}{g(t)}},\; t>0\},
\end{align*}
 where for $\delta>0$,
\begin{align*}
B_\delta:=\{\delta+\Sigma_\theta\}\cup S_\phi,
\end{align*}
and
\begin{align*}
S_\phi:=\{\lambda\in \C:\;|\mbox{arg}(\lambda)|<\phi\},\;\;(\mu+1)\frac{\pi}{2}<\phi<\frac{\pi}{2}+\theta.
\end{align*}
 It follows from Lemma \ref{lem-ex} that for such $\lambda$, we have that $\widetilde h(\lambda)\in \omega+\Sigma_{\theta}$. Hence, the resolvent $\left(\frac{(\lambda-\beta)\lambda^\mu}{\lambda^\mu+\alpha}I-A\right)^{-1}$ is well-defined. Thus the representation \eqref{eq1.2-P} of $G_{\alpha,\beta}^\mu(t)$ is meaningful.

Next, we split the path $\gamma$ into two parts $\gamma_1$ and $\gamma_2$ whose supports $\Gamma_1$ and $\Gamma_2$ are the sets formed by the complex numbers $\lambda$ such that $\lambda^{\mu+1}$ lies on the intersection of $\Gamma$ and the boundaries of $\frac{1}{g(t)}+\Sigma_\theta$ and $S_\phi$ respectively, that is,
\begin{align*}
\Gamma_1=\Gamma\cap\overline{\left\{\frac{1}{g(t)}+\Sigma_\theta\right\}} \quad \mbox{and} \quad \Gamma_2=\Gamma\cap\overline{\left\{S_\phi\right\}}.
\end{align*}
Thus $\Gamma=\Gamma_1\cup\Gamma_2$ and $G_{\alpha,\beta}^\mu(t)=I_1(t)+I_2(t),$ for $t>0,$ where
\begin{align*}
I_j(t):=\frac{1}{2\pi i}\int_{\gamma_j} e^{\lambda t}\frac{\lambda^\mu}{\lambda^\mu+\alpha}\left(\frac{(\lambda-\beta)\lambda^\mu}{\lambda^\mu+\alpha}I-A\right)^{-1}d\lambda, \quad j=1,2.
\end{align*}
Now, we estimate the norms $\|I_1(t)\|$ and $\|I_2(t)\|.$ %For $\lambda\in \mathbb{C}$ we let $h(\lambda):=\frac{(\lambda-\beta)\lambda^\mu}{\lambda^\mu+\alpha}.$

Using \eqref{eq1.1-P}  and \eqref{eq1.2-P} we get that for every $t>0$,
\begin{align}\label{I1-1}
\|I_1(t)\|&\leq\frac{1}{2\pi}\int_{\gamma_1}|e^{\lambda t}|\left|\frac{\lambda^\mu}{\lambda^{\mu+1}-(\beta+\omega)\lambda^{\mu}-\alpha\omega}\right||d\lambda|.
\end{align}
Let $\lambda_{{\min}}$ be the complex $\lambda\in \mathbb{C}$ such that $\operatorname{Im}(\lambda)>0,$ and $|\lambda^{\mu+1}-(\beta+\omega)\lambda^\mu-\alpha\omega|=\operatorname{dist} (L,\alpha\omega),$ where $L$ in the line passing through the point $(\frac{1}{g(t)},0)$ and the intersection of $\Gamma_1$ and $\Gamma_2$.  Then for $\lambda\in \Gamma_1$ we have that

\begin{align}\label{I1-2}
\frac{1}{|\lambda^{\mu+1}-(\beta+\omega)\lambda^{\mu}-\alpha\omega|}
\leq \frac{g(t)}{\cos(\theta)(1+\alpha|\omega|g(t))}, \quad t>0.
\end{align}
It follows from \eqref{I1-1} and \eqref{I1-2} that for every $t>0$,
\begin{align}\label{I1-3}
\|I_1(t)\|\leq\frac{M}{2\pi}\frac{g(t)}{\cos(\theta)(1+\alpha|\omega| g(t))}\int_{\gamma_1}|e^{\lambda t}||\lambda|^{\mu} |d\lambda|.
\end{align}
Recall that
\begin{align*}
|\arg(\lambda^{\mu+1})|<\phi<\frac{\pi}{2}+\theta\le (1+\widetilde \mu)\frac{\pi}{2}.
\end{align*}
Therefore the contour can be chosen such that
\begin{align*}
(\mu+1)\frac{\pi}{2}<|\arg(\lambda^{\mu+1})|<\phi<\frac{\pi}{2}+\theta\le (1+\widetilde \mu)\frac{\pi}{2}.
\end{align*}
Thus letting $\varphi=|\arg(\lambda)|$ we have that $\frac{\pi}{2}<\varphi<\frac{1+\widetilde\mu}{1+\mu}\frac{\pi}{2}$.
It follows from \eqref{I1-3} that (note that $\cos(\varphi)<0$) for every $t>0$ we have that
\begin{align}\label{eq-0-P}
\|I_1(t)\|\leq \frac{M}{2\pi}\frac{g(t)}{\cos(\theta)(1+\alpha|\omega| g(t))}\int_{0}^{\infty}e^{s\cos (\varphi)t} s^\mu ds
\leq\frac{C}{1+\alpha|\omega| g(t)}.
\end{align}

Similarly, if $\lambda\in \Gamma_2$, then for every $t>0$, we have that
\begin{align*}
\frac{1}{|\lambda^{\mu+1}-(\beta+\omega)\lambda^{\mu}-\alpha\omega|}\le \frac{1}{|\lambda^{\mu+1}-\alpha\omega|}\leq \frac{g(t)}{|\cos(\varphi)|}.
\end{align*}
Hence there exists a constant $C>0$ such that for every $t>0$,
\begin{align}\label{eq-1-P}
\|I_2(t)\|&\leq\frac{M}{2\pi}\frac{g(t)}{|\cos(\varphi)|}\int_{\gamma_2}|e^{\lambda t}||\lambda|^{\mu} |d\lambda|\notag\\
&\leq Cg(t)\int_{0}^{\infty}e^{s\cos (\varphi)t} s^\mu ds=C.
\end{align}
It follows from \eqref{eq-0-P} and \eqref{eq-1-P}  that there exists a constant $C>0$  such that
\begin{align}\label{I1-4}
\|G_{\alpha,\beta}^\mu(t)\|\leq C\left[1+  \frac{1}{1+\alpha|\omega|g(t)}\right] ,\;\;\forall \;t>0.
\end{align}
The estimate \eqref{eq-asymp1-P} follows from \eqref{uniq-lp}, \eqref{I1-4} and the strong continuity of the resolvent family on $[0,\infty)$. The proof of part (a) is complete.

(b) Now assume that $\alpha<0$ and $\alpha+\beta^\mu\ge |\alpha|$. We proceed similarly as in part (a) by considering again the same function $g(t)=t^{\mu+1}$, $t>0$.
But here we set
\begin{align*}
\Gamma_1=\Gamma\cap\overline{\left\{\alpha\omega+\frac{1}{g(t)}+\Sigma_\theta\right\}} \quad \mbox{and} \quad \Gamma_2=\Gamma\cap\overline{\left\{S_\phi\right\}}.
\end{align*}
Recall that here $\alpha\omega>0$.
The hypothesis implies that the representation \eqref{eq1.2-P} of $G_{\alpha,\beta}^\mu(t)$ is also valid in this case.
On $\Gamma_1$,  we have that
\begin{align}\label{B1}
\left|\frac{1}{\lambda^{\mu+1}-(\omega+\beta)\lambda^\mu-\alpha\omega}\right|\le \frac{1}{|\lambda^{\mu+1}-\alpha\omega|}\le \frac{g(t)}{\cos(\theta)},\;\;t>0,
\end{align}
and there exists a constant $C>0$ such that
\begin{align}\label{B2}
|\lambda|^{\mu+1}\le  C\left(\alpha\omega+\frac{1}{g(t)}\right).
\end{align}
It follows from \eqref{B2} that on $\Gamma_1$, we have that for every $t>0$,
\begin{align}\label{B3}
|e^{t\lambda}|\le e^{t|\lambda|}\le e^{Ct\left(\alpha\omega+\frac{1}{t^{\mu+1}}\right)^{\frac{1}{\mu+1}}}\le Ce^{(\alpha\omega)^{\frac{1}{\mu+1}}t}.
\end{align}
Using \eqref{B1}, \eqref{B2} and \eqref{B3} we get that for every $t>0$,

\begin{align}\label{B4}
\|I_1(t)\|\le\frac{C}{2\pi}\frac{ g(t)}{\cos(\theta)}\left(\alpha\omega+\frac{1}{g(t)}\right)e^{(\alpha\omega)^{\frac{1}{\mu+1}}t}\int_{\gamma_1}\frac{1}{|\lambda|}\;|d\lambda|.
\end{align}
Since
\begin{align*}
\mbox{length}(\gamma_1)\le C\left(\alpha\omega+\frac{1}{g(t)}\right)^{\frac{1}{\mu+1}},\;\;t>0
\end{align*}
and on $\Gamma_1$ (by using the law of sines),
\begin{align*}
|\lambda|\ge \left[\cos(\theta)\left(\alpha\omega+\frac{1}{g(t)}\right)\right]^{\frac{1}{\mu+1}},\;\;t>0,
\end{align*}
it follows from \eqref{B4} that for every $t>0$,
\begin{align}\label{E1}
\|I_1(t)\|\le C \Big[1+\alpha\omega g(t)\Big]e^{(\alpha\omega)^{\frac{1}{\mu+1}}t}.
\end{align}

Next, we consider the integral $I_2(t)$. Let $z_t$ and $\bar z_t$ be the intersection points of the boundary of $\alpha\omega+\frac{1}{g(t)}+\Sigma_{\theta}$ and $S_{\phi}$, for which we have
\begin{align*}
|\lambda^{\mu+1}-(\omega+\beta)\lambda^\mu-\alpha\omega|\ge |\lambda^{\mu+1}-\alpha\omega|\ge |\cos(\phi)||z_t|,\;\;\lambda\in \Gamma_2,
\end{align*}
and
\begin{align*}
|z_t|\ge C\left(\alpha\omega+\frac{1}{g(t)}\right),\;\;t>0.
\end{align*}
The same bounds are also valid for the conjugate $\bar z_t$ of $z_t$. Letting $\gamma_2(s)=se^{i\varphi}$, $s>0$ (recall that $\cos(\varphi)<0$), we get that for every $t>0$,
\begin{align}\label{E2}
\|I_2(t)\|\leq&\frac{1}{2\pi}\int_{\gamma_2}|e^{\lambda t}|\left|\frac{\lambda^{\mu}}{\lambda^{\mu+1}-(\omega+\beta)\lambda^\mu-\alpha\omega}\right||d\lambda|\notag\\
\le &\frac{1}{2\pi\sin(\phi)|z_t|}\int_{\gamma_2}|e^{\lambda t}|\lambda^{\mu}|\;|d\lambda|\notag\\
\le &\frac{C t^{\mu+1}}{1+\alpha\omega g(t)}\int_0^\infty e^{t\cos(\varphi)s}s^{\mu}\;ds\notag\\
\le &\frac{C}{1+\alpha\omega g(t)}.
\end{align}
It follows from \eqref{E1} and \eqref{E2} that for every $t> 0$,
\begin{align}\label{E3}
\|G_{\alpha,\beta}^\mu(t)\|\le  C \left[1+\alpha\omega g(t)\right]e^{(\alpha\omega)^{\frac{1}{\mu+1}}t}.
\end{align}
Now the estimate \eqref{eq-asymp2-P} follows from \eqref{uniq-lp}, \eqref{E3} and the strong continuity of the resolvent family on $[0,\infty)$. The proof is finished.
\end{proof}

\section{Explicit representation of resolvent families}\label{sec-par-ca}

In this section, we consider particular examples of the operator $A$ and give a more explicit (than the one in \eqref{uni-sol}) representation of the resolvent family associated with the system \eqref{eq-IV-0} in terms of some special functions and we also investigate their precise exponential bound.

\subsection{The case where  $A=\rho I$}

Here, we assume that the operator $A$ is given by $A=\rho I$ for some $\rho\in\R$.
Hence, the system \eqref{eq-IV-0} becomes

\begin{equation}\label{escal}
\begin{cases}
\displaystyle u'(t)=\rho u(t)+\frac{\rho\alpha}{\Gamma(\mu)}\int_{0}^t e^{-\beta(t-s)}(t-s)^{\mu-1}u(s)ds ,\;\;\;\, t>0,\\
u(0)=u_0.
\end{cases}
\end{equation}
We have the following explicit representation of solutions.

\begin{proposition}\label{prop-scar}
Let $\alpha\in\mathbb R$, $\alpha\ne 0$, $0< \mu\le1,\beta\ge 0$ and $A=\rho I$ for some $\rho\in\R$.
Assume that $\alpha> 0$ or $\alpha<0$ and $\alpha+\beta^\mu\ge |\alpha|$.
Then the strongly continuous exponentially bounded resolvent family $S_{\alpha,\beta}^\mu$ associated with the system \eqref{escal} is given by
\begin{align}\label{S-scal}
S_{\alpha,\beta}^\mu(t)=e^{-\beta t}\sum_{k=0}^\infty(\rho+\beta)^kt^k E_{\mu+1,k+1}^{(k+1)}(\alpha\rho t^{\mu+1}),\;\;t\ge 0,
\end{align}
provided that the series converges and where
\begin{align}\label{MLF}
E_{\mu+1,k+1}^{(k+1)}(z):=\sum_{n=0}^\infty \frac{(k+n)!z^n}{n!k!\Gamma(n\mu+k+n+1)}, \;\;\;z\in\mathbb{C},
\end{align}
denotes the generalized Mittag-Leffler function.
\end{proposition}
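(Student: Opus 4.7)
The strategy is to compute the Laplace transform of $S_{\alpha,\beta}^\mu$ explicitly, expand it as a double power series in $\lambda^{-1}$, and then invert term by term.

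First, I would substitute $A=\rho I$ into the formula $\widehat S_{\alpha,\beta}^\mu(\lambda)=g(\lambda)(h(\lambda)-A)^{-1}$ established in \eqref{Bb3}, obtaining after a short simplification
\begin{align*}
\widehat S_{\alpha,\beta}^\mu(\lambda)=\frac{(\lambda+\beta)^\mu}{(\lambda-\rho)(\lambda+\beta)^\mu-\rho\alpha},
\end{align*}
valid for $\op(\lambda)$ sufficiently large (by Theorem \ref{generation} and Corollary \ref{cor-generation}, the resolvent family exists and is exponentially bounded, so its Laplace transform is well defined on a right half plane). Using the translation rule $\mathcal{L}\{e^{-\beta t}G(t)\}(\lambda)=\widehat G(\lambda+\beta)$, I would then define $\sigma:=\rho+\beta$ and write $S_{\alpha,\beta}^\mu(t)=e^{-\beta t}G(t)$, where
\begin{align*}
\widehat G(\lambda)=\widehat S_{\alpha,\beta}^\mu(\lambda-\beta)=\frac{\lambda^\mu}{(\lambda-\sigma)\lambda^\mu-\rho\alpha}=\frac{\lambda^{-1}}{1-\sigma\lambda^{-1}-\rho\alpha\,\lambda^{-\mu-1}}.
\end{align*}

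Next, for $|\lambda|$ large enough the denominator can be expanded as a geometric series, and I would apply the binomial theorem to each term:
\begin{align*}
\widehat G(\lambda)=\lambda^{-1}\sum_{m=0}^\infty\bigl(\sigma\lambda^{-1}+\rho\alpha\,\lambda^{-\mu-1}\bigr)^m
=\sum_{k=0}^\infty\sum_{n=0}^\infty\binom{k+n}{k}\sigma^k(\rho\alpha)^n\lambda^{-1-k-n(\mu+1)}
\end{align*}
(after the re-indexing $m=k+n$). Applying the elementary identity $\mathcal{L}^{-1}\{\lambda^{-s}\}(t)=t^{s-1}/\Gamma(s)$ to each term and then pulling out $\sigma^k t^k$ yields
\begin{align*}
G(t)=\sum_{k=0}^\infty\sigma^k t^k\sum_{n=0}^\infty\frac{(k+n)!}{k!\,n!}\frac{(\rho\alpha\,t^{\mu+1})^n}{\Gamma(k+n(\mu+1)+1)}.
\end{align*}

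Finally, since $k+n(\mu+1)+1=n\mu+k+n+1$, the inner sum is exactly $E_{\mu+1,k+1}^{(k+1)}(\alpha\rho\,t^{\mu+1})$ by definition \eqref{MLF}. Multiplying by $e^{-\beta t}$ and substituting back $\sigma=\rho+\beta$ gives the formula \eqref{S-scal}. The one point requiring care is the justification of swapping sums with the inverse Laplace transform, but this is built in to the statement through the hypothesis ``provided that the series converges'', which ensures absolute convergence on the relevant domain; I expect this to be the only mildly delicate step, and it is standard once absolute convergence of the double series is assumed. Uniqueness of the Laplace transform then identifies $S_{\alpha,\beta}^\mu$ with the resulting series.
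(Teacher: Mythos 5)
Your proof is correct and follows essentially the same route as the paper: substitute $A=\rho I$ into the Laplace transform, factor out $e^{-\beta t}$ to pass to $\widehat G(\lambda)=\lambda^\mu/(\lambda^{\mu+1}-(\rho+\beta)\lambda^\mu-\alpha\rho)$, expand as a geometric series for $|\lambda|$ large, and invert term by term. The only (minor) difference is that where the paper invokes the known identity $\mathcal L^{-1}\bigl(\lambda^{(k+1)\mu}/[\lambda^{\mu+1}-\alpha\rho]^{k+1}\bigr)(t)=t^k E_{\mu+1,k+1}^{(k+1)}(\alpha\rho t^{\mu+1})$ from the literature, you rederive it on the fly via a further binomial expansion into powers of $\lambda^{-1}$ and a resummation of the inner series --- a self-contained but equivalent step, subject to the same formal interchange of summation and inversion that the hypothesis ``provided that the series converges'' is meant to cover.
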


\begin{proof}
Let $\alpha\in\mathbb R$, $\alpha\ne 0$, $0<\mu\le 1$, $\beta\ge 0$ and $A=\rho I$ for some $\rho\in\R$.
Assume that $\alpha> 0$ or $\alpha<0$ and $\alpha+\beta^\mu\ge |\alpha|$. Then it follows from Corollary \ref{cor-generation} that there exists a strongly continuous resolvent family $S_{\alpha,\beta}^\mu$ of type $(M,\widetilde \omega)$ (for some $\widetilde\omega>0$) such that the unique solution of \eqref{escal} is given by \eqref{uni-sol}. In addition we have that the Laplace transform of $S_{\alpha,\beta}^\mu(t)$ is given by
\begin{align}\label{Lap-S}
\widehat S_{\alpha,\beta}^\mu(\lambda):=\frac{(\lambda+\beta)^{\mu}}{(\lambda+\beta)^{\mu+1}-(\rho+\beta)(\lambda+\beta)^\mu-\alpha\rho},\;\;\op(\lambda)>\widetilde\omega.
\end{align}
Using the properties of Laplace transform, we have that
\begin{align}\label{M1}
S_{\alpha,\beta}^\mu(t)=e^{-\beta t}G_{\alpha,\beta}^\mu(t),
\end{align}
for some function $G_{\alpha,\beta}^\mu(t)$ whose Laplace transform is given for all $\op(\lambda)>\widetilde\omega+\beta$ by
\begin{align}\label{Lap-G}
\widehat G_{\alpha,\beta}^\mu(\lambda)=\frac{\lambda^{\mu}}{\lambda^{\mu+1}-(\rho+\beta)\lambda^\mu-\alpha\rho}.
\end{align}
Since
\begin{align*}
\left|\frac{(\rho+\beta)\lambda^\mu}{\lambda^{\mu+1}-\alpha\rho}\right|<1,
\end{align*}
for $\lambda$ large enough, we have that
\begin{align*}
\widehat G_{\alpha,\beta}^\mu(\lambda)=&\frac{\lambda^{\mu}}{\lambda^{\mu+1}-(\rho+\beta)\lambda^\mu-\alpha\rho}
=\frac{\lambda^{\mu}} {[\lambda^{\mu+1}-\alpha\rho][1-\frac{(\rho+\beta)\lambda^\mu}{\lambda^{\mu+1}-\alpha\rho}]}\notag\\
=&\frac{\lambda^{\mu}} {\lambda^{\mu+1}-\alpha\rho}\sum_{k=0}^\infty \frac{(\rho+\beta)^k\lambda^{k\mu}}{[\lambda^{\mu+1}-\alpha\rho]^k}
=\sum_{k=0}^\infty \frac{(\rho+\beta)^k\lambda^{(k+1)\mu}}{[\lambda^{\mu+1}-\alpha\rho]^{k+1}}\notag\\
=&\sum_{k=0}^\infty \frac{(\rho+\beta)^{k}\lambda^{2(k+1)\mu}}{\lambda^{(k+1)\mu}[\lambda^{\mu+1}-\alpha\rho]^{k+1}}.
\end{align*}
Taking the inverse Laplace transform, we get that (see e.g. \cite[Formula 17.6]{Ha-Ma-Sa-11})
\begin{align*}
\mathcal L^{-1}\left(\frac{\lambda^{2(k+1)\mu}}{\lambda^{(k+1)\mu}[\lambda^{\mu+1}-\alpha\rho]^{k+1}}\right)(t)=t^k E_{\mu+1,k+1}^{(k+1)}(\alpha\rho t^{\mu+1}),
\end{align*}
where $ E_{\mu+1,k+1}^{(k+1)}$ is the generalized Mittag-Leffler function given in \eqref{MLF}. We have shown that
\begin{align}\label{M2}
G_{\alpha,\beta}^\mu(t)=\sum_{k=0}^\infty (\rho+\beta)^kt^k E_{\mu+1,k+1}^{(k+1)}(\alpha\rho t^{\mu+1}).
\end{align}
Now \eqref{S-scal} follows from \eqref{M1} and \eqref{M2}. The proof is finished.
\end{proof}

We have the following result as a direct consequence of Theorem \ref{th2-P}.

\begin{corollary}
Let $\alpha\in\mathbb R$, $\alpha\ne 0$, $0<\mu\le 1$, $\beta\ge 0$, $A=\rho I$ for some $\rho< 0$ and assume in addition that $\rho+\beta\le 0$. Then the following assertions hold.
\begin{enumerate}
\item If $\alpha>0$, then there exists a constant $M>0$ such that
\begin{align}\label{EST1}
\|S_{\alpha,\beta}^\mu(t)\|\le Me^{-\beta t},\;\;\;\forall\;t\ge 0.
\end{align}
\item If $\alpha<0$ and $\alpha+\beta^\mu\ge |\alpha|$, then there exists a constant $M>0$ such that
\begin{align}\label{EST2}
\|S_{\alpha,\beta}^\mu(t)\|\le M\left(1+\alpha\rho t^{\mu+1}\right)e^{-(\beta-(\alpha\rho)^{\frac{1}{\mu+1}}) t},\;\;\;\forall\;t\ge 0.
\end{align}
\end{enumerate}
\end{corollary}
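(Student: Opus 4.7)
The plan is to read the claim off Theorem \ref{th2-P} after verifying that the scalar operator $A=\rho I$ satisfies its hypotheses with the sectoriality parameter $\omega$ chosen equal to $\rho$.

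The first step is to check sectoriality. Since $\sigma(A)=\{\rho\}$, for every $\lambda\ne \rho$ the resolvent is the scalar $(\lambda-A)^{-1}=(\lambda-\rho)^{-1}I$, and consequently $\|(\lambda-A)^{-1}\|=|\lambda-\rho|^{-1}$. Picking any $\widetilde\mu\in(\mu,1)$, the open sector $\rho+\Sigma_{\widetilde\mu\pi/2}$ is contained in $\C\setminus\{\rho\}=\rho(A)$, and the estimate $\|(\lambda-A)^{-1}\|\le M/|\lambda-\rho|$ holds trivially with $M=1$. Thus $A=\rho I$ is $\rho$-sectorial of angle $\widetilde\mu\pi/2$.

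Next, the remaining standing hypotheses of Theorem \ref{th2-P} are $\omega<0$ and $\beta+\omega\le 0$: with $\omega=\rho$ these amount to $\rho<0$ and $\beta+\rho\le 0$, both of which are assumed. I would then invoke Theorem \ref{th2-P}(1) with $\omega=\rho$ in case (a) to obtain $\|S_{\alpha,\beta}^\mu(t)\|\le Ce^{-\beta t}$, which is exactly \eqref{EST1}, and Theorem \ref{th2-P}(2) with $\omega=\rho$ in case (b) to obtain $\|S_{\alpha,\beta}^\mu(t)\|\le C(1+\alpha\rho\, t^{\mu+1})e^{-(\beta-(\alpha\rho)^{1/(\mu+1)})t}$, which is exactly \eqref{EST2}.

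Since Theorem \ref{th2-P} imposes the strict inequality $\mu<\widetilde\mu\le 1$, this reduction covers the range $0<\mu<1$; the endpoint $\mu=1$ corresponds to the classical exponential-kernel case already treated by semigroup methods in \cite{Ch-Xi-Li-09} and can be cited separately to complete the statement. I do not anticipate any real obstacle beyond the choice of $\widetilde\mu\in(\mu,1)$ and the (immediate) verification of sectoriality for a scalar multiple of the identity; the proof is essentially a matching of parameters to the general theorem.
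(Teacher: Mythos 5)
Your proof is correct and follows essentially the same route as the paper: verify that $A=\rho I$ is $\omega$-sectorial with $\omega=\rho<0$ for any angle $\theta<\frac{\pi}{2}$, check $\beta+\omega\le 0$, and invoke Theorem \ref{th2-P}. Your extra observation that the case $\mu=1$ falls outside the strict hypothesis $0<\mu<\widetilde\mu\le 1$ of Theorem \ref{th2-P} and must be handled by citing the semigroup results of \cite{Ch-Xi-Li-09} is a point the paper's own one-line proof glosses over, and is a welcome precision.
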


\begin{proof}
This is a particular case of Theorem \ref{th2-P} since  $A$ is $\omega$-sectorial of angle $\theta$ for every $0<\theta<\frac{\pi}{2}$, with $\omega=\rho<0$ and $\omega+\beta=\rho+\beta\le 0$ by assumption.
\end{proof}

In the following remark we show that if $\mu=1$ one recovers some of the results contained in the references \cite{Ch-Xi-Li-09,Li-Po-11}.

\begin{remark}
Let $\mu=1$ and $A=\rho I$ for some $\rho<0$. It follows from \eqref{Lap-G} that
\begin{align*}
\widehat G_{\alpha,\beta}^1(\lambda)=\frac{\lambda}{\lambda^2-(\rho+\beta)\lambda-\alpha\rho}.
\end{align*}
Let
\begin{align*}
D:=(\rho+\beta)^2+4\alpha\rho.
\end{align*}
We have the following three situations.
\begin{itemize}
\item[(i)] If $D>0$, then let $\lambda_1:=\frac{\rho+\beta+\sqrt D}{2}$ and $\lambda_2:=\frac{\rho+\beta-\sqrt D}{2}$. Using partial fractions, we get that
\begin{align*}
\widehat G_{\alpha,\beta}^1(\lambda)=\frac{1}{\sqrt D}\left(\frac{\lambda_1}{\lambda-\lambda_1}-\frac{\lambda_2}{\lambda-\lambda_2}\right).
\end{align*}
This implies that
\begin{align*}
G_{\alpha,\beta}^1(t)=\frac{\rho+\beta+\sqrt D}{2\sqrt D}e^{\frac{\rho+\beta+\sqrt D}{2} t}-\frac{\rho+\beta-\sqrt D}{2\sqrt D}e^{\frac{\rho+\beta-\sqrt D}{2} t},\;\;\forall\;t\ge 0,
\end{align*}
so that
\begin{align*}
S_{\alpha,\beta}^1(t)=\frac{\rho+\beta+\sqrt D}{2\sqrt D}e^{\frac{\rho-\beta+\sqrt D}{2} t}-\frac{\rho+\beta-\sqrt D}{2\sqrt D}e^{\frac{\rho-\beta-\sqrt D}{2} t},\;\forall\;t\ge 0.
\end{align*}
In that case one has uniform exponential stability if and only if
\begin{align*}
\rho-\beta+\sqrt{(\rho+\beta)^2+4\alpha\rho}<0.
\end{align*}
\item[(ii)] If $D=0$, then
\begin{align*}
\widehat G_{\alpha,\beta}^1(\lambda)=\frac{\lambda}{\left(\lambda-\frac{\rho+\beta}{2}\right)^2}=\frac{1}{\lambda-\frac{\rho+\beta}{2}}+\frac{\rho+\beta}{2}\frac{1}{\left(\lambda-\frac{\rho+\beta}{2}\right)^2}.
\end{align*}
This implies that
\begin{align*}
G_{\alpha,\beta}^1(t)=\left(1+\frac{\rho+\beta}{2}t\right)e^{\frac{\rho+\beta}{2} t},\;\;\forall\;t\ge 0,
\end{align*}
so that
\begin{align*}
S_{\alpha,\beta}^1(t)=\left(1+\frac{\rho+\beta}{2}t\right)e^{\frac{\rho-\beta}{2} t},\;\;\forall\;t\ge 0.
\end{align*}
In that case one has uniform exponential stability if and only if
\begin{align*}
\rho-\beta<0.
\end{align*}
\item[(iii)] Now if $D<0$, then let $\lambda_0:=\frac{\rho+\beta +i\sqrt{-D}}{2}$ and $\overline{\lambda_0}=\frac{\rho+\beta -i\sqrt{-D}}{2}$. Proceeding as above we get that for every $t\ge 0$,
\begin{align*}
S_{\alpha,\beta}^1(t)=\left(\cos\left(\sqrt{-D}t\right)-i\frac{\rho+\beta}{\sqrt{-D}}\sin\left(\sqrt{-D}t\right)\right)e^{\frac{\rho-\beta}{2} t},
\end{align*}
so that one has uniform exponential stability if and only if
\begin{align*}
\rho-\beta<0.
\end{align*}
\end{itemize}
\end{remark}

\subsection{The case of a self-adjoint operator}

We assume that $-A$ is a non-negative and self-adjoint operator with compact resolvent on the Hilbert space $L^2(\Omega)$ where $\Omega\subset\mathbb R^N$ is a bounded open set.
Since $-A$ is non-negative and self-adjoint  with compact resolvent, then it has a discrete spectrum which is formed of eigenvalues. Its eigenvalues satisfy $0\le\lambda_1\le\lambda_2\le\cdots\le\lambda_n\le\cdots$ and $\lim_{n\to\infty}\lambda_n=\infty$. We denote the normalized eigenfunction associated with $\lambda_n$ by $\phi_n$. Then $\{\phi_n : n\in \mathbb{N} \}$ is an orthonormal basis for $L^2(\Omega)$ and the set is also total in $D(A)$.
Observe also that for every $v\in D(A)$ we can write
\begin{align*}
-Av=\sum_{k=1}^\infty\lambda_n \langle v,\phi_n\rangle_{L^2(\Omega)} \phi_n.
\end{align*}

 We have the following result as a direct consequence of Proposition \ref{prop-scar}.

 \begin{corollary}
Let $\alpha\in\mathbb R$, $\alpha\ne 0$, $0< \mu\le1$, $\beta>0$ and let $A$ be as above.
Assume that $\alpha> 0$ or $\alpha<0$ and $\alpha+\beta^\mu\ge |\alpha|$.
Then the strongly continuous exponentially bounded resolvent family $S_{\alpha,\beta}^\mu$ associated with the system \eqref{eq-IV-0} is given by
 \begin{align}\label{sol-hil-2}
S_{\alpha,\beta}^\mu(t)=e^{-\beta t}\sum_{n=1}^\infty\sum_{k=0}^\infty(\beta-\lambda_n)^kt^k E_{\mu+1,k+1}^{(k+1)}(-\alpha\lambda_nt^{\mu+1}),\;\;t\ge 0,
\end{align}
provided that the series converges.
 \end{corollary}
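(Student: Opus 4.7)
The plan is to diagonalize the system on the eigenbasis of $-A$ and apply Proposition \ref{prop-scar} to each scalar problem obtained on a one-dimensional spectral subspace.

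First I would check that Corollary \ref{cor-generation} applies. Since $-A$ is non-negative and self-adjoint on $L^2(\Omega)$, the operator $A$ generates a bounded analytic $C_0$-semigroup of angle $\pi/2$; in particular, for every $\widetilde\omega>0$ the operator $A$ is $\widetilde\omega$-sectorial of angle $\mu\pi/2$. Together with the hypothesis on $\alpha$ and $\beta$, Corollary \ref{cor-generation} yields a strongly continuous exponentially bounded $(\alpha,\beta,\mu)$-resolvent family $S_{\alpha,\beta}^\mu$ on $L^2(\Omega)$.

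Next, each eigenfunction $\phi_n\in D(A)$ satisfies $A\phi_n=-\lambda_n\phi_n$, so $\mathrm{span}(\phi_n)$ is an invariant subspace for $A$ on which it acts as $\rho_nI$ with $\rho_n=-\lambda_n$. Using the commutation property $S_{\alpha,\beta}^\mu(t)A=AS_{\alpha,\beta}^\mu(t)$ on $D(A)$, one sees that $S_{\alpha,\beta}^\mu(t)\phi_n$ remains in the eigenspace associated to $\lambda_n$; equivalently (and more transparently), the Laplace transform representation \eqref{Bb3} gives
\begin{align*}
\widehat S_{\alpha,\beta}^\mu(\lambda)\phi_n=g(\lambda)(h(\lambda)-A)^{-1}\phi_n=\frac{g(\lambda)}{h(\lambda)+\lambda_n}\phi_n,
\end{align*}
and a direct algebraic simplification shows that this coincides with the Laplace transform \eqref{Lap-S} of the scalar resolvent family of the system \eqref{escal} with $\rho=\rho_n=-\lambda_n$. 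Applying Proposition \ref{prop-scar} with $\rho=-\lambda_n$ and inverting the Laplace transform then yields
\begin{align*}
S_{\alpha,\beta}^\mu(t)\phi_n=e^{-\beta t}\sum_{k=0}^\infty(\beta-\lambda_n)^k t^k E_{\mu+1,k+1}^{(k+1)}(-\alpha\lambda_n t^{\mu+1})\phi_n,\qquad t\ge 0.
\end{align*}

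Finally, for arbitrary $u_0\in L^2(\Omega)$ with expansion $u_0=\sum_{n\ge 1}\langle u_0,\phi_n\rangle_{L^2(\Omega)}\phi_n$, the asserted representation \eqref{sol-hil-2} follows by linearity and continuity of $S_{\alpha,\beta}^\mu(t)$ combined with termwise application of the previous eigenvalue identity, granted the convergence of the resulting double series stipulated in the statement. The only genuine subtlety is verifying that the scalar restriction of $S_{\alpha,\beta}^\mu(t)$ to $\mathrm{span}(\phi_n)$ really matches the scalar resolvent family of Proposition \ref{prop-scar}; going through the Laplace side as above disposes of this at once, which is why the result is stated as a direct corollary.
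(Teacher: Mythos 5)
Your proposal is correct and follows essentially the same route as the paper: decompose on the orthonormal eigenbasis of $-A$, reduce to the scalar problem with $\rho=-\lambda_n$, apply Proposition \ref{prop-scar}, and resum. The only (cosmetic) difference is that you verify the scalar reduction on the Laplace-transform side, whereas the paper projects the equation \eqref{eq-IV-0} directly onto each $\phi_n$ to obtain the scalar integro-differential equation for $u_n(t)=\langle u(t),\phi_n\rangle_{L^2(\Omega)}$; both arguments are equivalent.
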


 \begin{proof}
 Let $A$ be as above.
Multiplying both sides of \eqref{eq-IV-0} by $\phi_n(x)$ and integrating over the set $\Omega$  we get that for every $n\in\mathbb N$, the function $u_n(t):=\langle u(t),\phi_n\rangle_{L^2(\Omega)}$ is a solution of the system
\begin{align*}%\label{eq-scal-n-2}
\begin{cases}
\displaystyle u_n'(t)=-\lambda_n u_n(t)+\frac{-\lambda_n\alpha}{\Gamma(\mu)}\int_{0}^t e^{-\beta(t-s)}(t-s)^{\mu-1}u_n(s)ds, \,\; t>0\\
u_n(0)=u_{0,n},
\end{cases}
\end{align*}
where $u_{0,n}=\langle u_0,\phi_n\rangle_{L^2(\Omega)}$.
It follows from Proposition \ref{prop-scar} that
\begin{align*}%\label{sol-hil-n-2}
u_n(t)=S_{\alpha,\beta,n}^\mu(t)u_{0,n},\;\forall\;t\ge 0,
\end{align*}
where for every $n\in\N$,
\begin{align*}%\label{Sn}
S_{\alpha,\beta,n}^\mu(t)=e^{-\beta t}\sum_{k=0}^\infty(\beta-\lambda_n)^kt^k E_{\mu+1,k+1}^{(k+1)}(-\alpha\lambda_nt^{\mu+1}),\;\forall\;t\ge 0.
\end{align*}
Since
\begin{align*}
u(t,x)=\sum_{n=1}^\infty u_n(t)\phi_n(x),\;\forall\;t\ge 0,\;x\in\Omega,
\end{align*}
we have that
\begin{align*}%\label{sou}
u(t,x)=\sum_{n=1}^\infty S_{\alpha,\beta,n}^\mu(t)u_{0,n}\phi_n(x),\;\;\forall\;t\ge 0,\;x\in\Omega,
\end{align*}
so that $S_{\alpha,\beta}^\mu$ is given by the expression in \eqref{sol-hil-2}. The proof is finished.
 \end{proof}

\begin{corollary}
Let $\alpha\in\mathbb R$, $\alpha\ne 0$, $0< \mu\le 1$, $\beta\ge 0$ and let $A$ be as above. Assume that the first eingenvalue $\lambda_1>0$  and that $\beta-\lambda_1\le 0$. Then the following assertions hold.
\begin{enumerate}
\item If $\alpha>0$, then there exists a constant $M>0$ such that
\begin{align*}%\label{est-self-1}
\|S_{\alpha,\beta}^\mu(t)\|\le Me^{-\beta t},\;\;\;\forall\;t\ge 0.
\end{align*}
\item If $\alpha<0$ and $\alpha+\beta^\mu\ge |\alpha|$, then there exists a constant $M>0$ such that
\begin{align*}%\label{est-self-2}
\|S_{\alpha,\beta}^\mu(t)\|\le M\left(1-\alpha\lambda_1t^{\mu+1}\right)e^{-(\beta-(-\alpha\lambda_1)^{\frac{1}{\mu+1}}) t},\;\;\;\forall\;t\ge 0.
\end{align*}
\end{enumerate}
\end{corollary}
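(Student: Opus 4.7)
The plan is to deduce this statement directly from Theorem \ref{th2-P} once we verify that the self-adjoint framework provides the required sectoriality with the correct parameters. The work is essentially bookkeeping: identify the right $\omega$ and angle, check the hypotheses line up, and translate the abstract estimate \eqref{eq-asymp2-P} into the concrete form appearing in the corollary.

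First, I would establish the sectoriality of $A$. Since $-A$ is non-negative and self-adjoint with compact resolvent, the spectral theorem gives $\sigma(A)=\{-\lambda_n:n\in\mathbb N\}\subset(-\infty,-\lambda_1]$, and $A$ is self-adjoint and non-positive. By the functional calculus for self-adjoint operators, for every $\theta\in[0,\pi/2)$ and every $\lambda\in -\lambda_1+\Sigma_\theta$ with $\lambda\neq -\lambda_1$,
\begin{align*}
\|(\lambda-A)^{-1}\|=\sup_{n\ge 1}\frac{1}{|\lambda+\lambda_n|}\le \frac{1}{|\lambda+\lambda_1|\cos\theta}\cdot C,
\end{align*}
so $A$ is $\omega$-sectorial of angle $\theta$ with $\omega:=-\lambda_1<0$ for any $\theta<\pi/2$. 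In particular, for the $\mu$ appearing in the kernel we can pick any $\widetilde\mu\in(\mu,1]$ and take the angle $\widetilde\mu\,\pi/2$, which is strictly less than $\pi/2$.

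Next, I would verify the remaining hypotheses of Theorem \ref{th2-P}. The hypothesis $\omega<0$ holds by $\omega=-\lambda_1<0$, and the condition $\beta+\omega\le 0$ becomes exactly $\beta-\lambda_1\le 0$, which is assumed. Thus both assertions of Theorem \ref{th2-P} apply. For part (a), $\alpha>0$ gives $\|S_{\alpha,\beta}^\mu(t)\|\le Ce^{-\beta t}$ directly from \eqref{eq-asymp1-P}. For part (b), with $\alpha<0$ and $\alpha+\beta^\mu\ge|\alpha|$, \eqref{eq-asymp2-P} yields
\begin{align*}
\|S_{\alpha,\beta}^\mu(t)\|\le C\bigl(1+\alpha\omega\, t^{\mu+1}\bigr)e^{-(\beta-(\alpha\omega)^{1/(\mu+1)})t}.
\end{align*}
Substituting $\omega=-\lambda_1$ gives $\alpha\omega=-\alpha\lambda_1>0$ (since both $\alpha$ and $\omega$ are negative), so $1+\alpha\omega t^{\mu+1}=1-\alpha\lambda_1 t^{\mu+1}$ and $(\alpha\omega)^{1/(\mu+1)}=(-\alpha\lambda_1)^{1/(\mu+1)}$, recovering exactly the exponent and polynomial factor in the claimed bound.

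There is no real obstacle here; the only subtlety is making sure the angle condition $0\le\theta\le\widetilde\mu\,\pi/2$ in Theorem \ref{th2-P} is satisfiable, which follows from the fact that self-adjoint operators are sectorial of every angle strictly less than $\pi/2$, together with the freedom to choose $\widetilde\mu\in(\mu,1]$ so that $\widetilde\mu\,\pi/2$ is an admissible angle. Once that is observed, the corollary is immediate.
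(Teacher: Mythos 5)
Your proposal is correct and follows essentially the same route as the paper: both reduce the corollary to Theorem \ref{th2-P} by observing that the non-positive self-adjoint operator $A$ is $\omega$-sectorial of every angle $\theta<\frac{\pi}{2}$ with $\omega=-\lambda_1<0$, that $\beta+\omega=\beta-\lambda_1\le 0$ by hypothesis, and then substituting $\omega=-\lambda_1$ into the estimates \eqref{eq-asymp1-P} and \eqref{eq-asymp2-P}. Your additional spectral-theorem justification of the sectoriality is a welcome but inessential elaboration of what the paper asserts in one line.
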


\begin{proof}
This is a particular case of  Theorem \ref{th2-P}, given that by assumption, the operator $A$ is $\omega$-sectorial of angle $\theta$ for every $0< \theta<\frac{\pi}{2}$, with $\omega=-\lambda_1<0$ and that $\beta+\omega=\beta-\lambda_1\le 0$.
\end{proof}

%\vspace{0.5cm}

%{\bf Acknowledgments}
%\begin{itemize}
%\item The research of the first author is partially supported by Fondecyt Iniciaci\'on \#11130619.

%\item The work of the second author is partially supported by the Air Force Office of Scientific Research grant under the Award No [FA9550-15-1-0027].
%\end{itemize}

\end{document}